\newtheorem{theorem}{Theorem}[section]
\newtheorem{lemma}[theorem]{Lemma}
\newtheorem{prop}[theorem]{Proposition}
\newtheorem{cor}[theorem]{Corollary}
\theoremstyle{definition}
\newtheorem{definition}[theorem]{Definition}
\newtheorem{example}[theorem]{Example}
\theoremstyle{remark}
\numberwithin{equation}{section}
\begin{document}

\title{Basis entropy in Banach spaces}

\author{Andrei Dorogovtsev}
\address{Institute of Mathematics, National Academy of Sciences, Kyiv, Ukraine}
\email{adoro@imath.kiev.ua}

\author{Mikhail Popov}
\address{Department of Mathematics and Informatics, Chernivtsi National University,
Chernivtsi, Ukraine}
\email{misham.popov@gmail.com}

\subjclass[2010]{Primary 54C40, 14E20; Secondary 46E25, 20C20}

\date{September 1, 2013 and, in revised form, ...}


\keywords{Entropy, precompact sets, Schauder basis}

\begin{abstract}
To be decided
\end{abstract}

\maketitle

\section{Introduction}

All linear (e.g., normed, Banach, Hilbert) spaces are considered over the reals.

A subset $A$ of a Banach space $X$ is called \emph{precompact} if for every $\varepsilon > 0$, $A$ contains a finite $\varepsilon$-net, that is, a finite collection $x_1, \ldots, x_m \in A$ with $(\forall x \in A)(\exists k \in \{1, \ldots, m\}) \|x - x_k\| \leq \varepsilon$. By the well known Hausdorff criterion (which is valid for metric spaces), $A$ is precompact if and only if its norm closure $\overline{A}$ is compact in $X$.

\subsection{Necessary information on bases in Banach spaces}

\subsubsection{Bases in Banach spaces}

We follow mainly \cite{LTI} (see also \cite{AK}, \cite{S}). Recall that a sequence $(e_n)_{n=1}^\infty$ in $X$ is called a \emph{basis} (more precisely, a \emph{Schauder basis}) of $X$ if for every $x \in X$ there is a unique sequence of scalars $(a_n)_{n=1}^\infty$ such that $x = \sum_{n=1}^\infty a_n e_n$. In this case, the coefficients $a_n = e_n^*(x)$ are continuous linear functionals of $x$ and called \emph{biorthogonal functionals}. So, $x = \sum_{n=1}^\infty e_n^*(x) \, e_n$ for each $x \in X$. The biorthogonal functionals possess the following property: $e_i^*(e_j) = \delta_{i,j}$. Moreover, this property determines the biorthogonal functionals: for every sequence $(f_n)_{n=1}^\infty$ in $X^*$ the condition $f_i(e_j) = \delta_{i,j}$ for all $i,j$ implies that $f_i = e_i^*$ for all $i$. The partial sums projections $P_n$ of $X$ defined by $P_n x = \sum_{k=1}^n e_k^*(x) \, e_k$, $x \in X$, calling the \emph{basis projections}, are uniformly bounded in $n$, and the number $K = \sup_n \|P_n\| < \infty$ is called the \emph{basis constant} of $(e_n)_{n=1}^\infty$. A \emph{basic sequence} is any sequence $(e_n)_{n=1}^\infty$ in $X$ which is a basis of some subspace $X_0$ of $X$ (more precisely, a basis of its closed linear span $[e_n]_{n=1}^\infty$). A sequence $(e_n)_{n=1}^\infty$ of nonzero elements of $X$ is a basic sequence if and only if there is a number $K \in [1,+\infty)$ such that
$$
\Bigl\| \sum\limits_{k=1}^n a_k e_k \Bigr\| \leq K \, \Bigl\| \sum\limits_{k=1}^m a_k e_k \Bigr\|;
$$
for all $1 \leq n < m$ and all collections of scalars $(a_k)_{k=1}^m$. A basis (basic sequence) $(e_n)_{n=1}^\infty$ is said to be \emph{normalized} provided that $\|e_n\|=1$ for all $n$. If $(e_n)_{n=1}^\infty$ is a basic sequence then $(e_n/\|e_n\|)_{n=1}^\infty$ is a normalized basic sequence. Let $(e_n)_{n=1}^\infty$ be a basis sequence in $X$; $(a_n)_{n=1}^\infty$ a sequence of scalars and $0 \leq k_1 < k_2 < \ldots$ integers. A sequence $(u_n)_{n=1}^\infty$ of nonzero vectors in $X$ of the form
$$
u_n = \sum\limits_{i = k_n + 1}^{k_{n+1}} a_i e_i
$$
is called a \emph{block basis} of $(e_n)_{n=1}^\infty$. Every block basis (in particular, every subsequence) of a basic sequence is itself a basic sequence with a basis constant which does not exceed that of $(e_n)_{n=1}^\infty$.

\subsubsection{Unconditional bases}

A series $\sum_{n=1}^\infty x_n$ of elements of a Banach space $X$ is said to be \emph{unconditionally convergent} if for any permutation\footnote{e.i., a bijection} of the positive integers $\varphi: \mathbb N \to \mathbb N$ the series $\sum_{n=1}^\infty x_{\varphi (n)}$ converges. We need the next criterion of unconditional convergence \cite[Lemma~16.1]{S}.
\begin{lemma} \label{le:sin}
For any sequence $(x_n)_{n=1}^\infty$ in a Banach space $X$ the following assertions are equivalent
\begin{enumerate}
  \item[$(i)$] the series $\sum_{n=1}^\infty x_n$ unconditionally converges;
  \item[$(ii)$] for any sequence of signs $\theta_n = \pm 1$ the series $\sum_{n=1}^\infty \theta_n x_n$ converges;
  \item[$(iii)$] for any sequence of scalars $(a_n)_{n=1}^\infty$ such that $|a_n| \leq 1$, $n = 1,2, \ldots$ the series $\sum_{n=1}^\infty a_n x_n$ converges.
\end{enumerate}
\end{lemma}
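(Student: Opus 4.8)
The plan is to route everything through an auxiliary condition, the \emph{unconditional Cauchy condition} (UC): for every $\varepsilon>0$ there is $N\in\mathbb N$ such that $\bigl\|\sum_{n\in F}x_n\bigr\|\le\varepsilon$ for every finite set $F\subseteq\{N+1,N+2,\dots\}$. I would prove the five implications $(i)\Rightarrow(\mathrm{UC})$, $(\mathrm{UC})\Rightarrow(i)$, $(\mathrm{UC})\Rightarrow(iii)$, $(iii)\Rightarrow(ii)$ and $(ii)\Rightarrow(\mathrm{UC})$; together these give the equivalence of $(i)$, $(ii)$, $(iii)$, with completeness of $X$ silently upgrading ``Cauchy'' to ``convergent'' wherever needed.

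The two implications \emph{out of} (UC) are the routine ones. For $(\mathrm{UC})\Rightarrow(i)$, given a bijection $\varphi\colon\mathbb N\to\mathbb N$ and $\varepsilon>0$, I pick $N$ from (UC) and then $M$ so large that $\{1,\dots,N\}\subseteq\varphi(\{1,\dots,M\})$; for $q>p\ge M$ the set $F=\varphi(\{p+1,\dots,q\})$ is a finite subset of $\{N+1,N+2,\dots\}$, so $\bigl\|\sum_{n=p+1}^{q}x_{\varphi(n)}\bigr\|=\bigl\|\sum_{m\in F}x_m\bigr\|\le\varepsilon$ and $\sum_n x_{\varphi(n)}$ is Cauchy. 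For $(\mathrm{UC})\Rightarrow(iii)$, given scalars with $|a_n|\le1$ and $q>p\ge N$, the vector $\sum_{n=p+1}^{q}a_nx_n$ lies in the convex hull of the finitely many vectors $\sum_{n=p+1}^{q}\theta_nx_n$, $\theta_n=\pm1$ (the cube $[-1,1]^{q-p}$ is the convex hull of its vertices and $(c_n)\mapsto\sum c_nx_n$ is linear), so $\bigl\|\sum_{n=p+1}^{q}a_nx_n\bigr\|\le\max_{\theta}\bigl\|\sum_{n=p+1}^{q}\theta_nx_n\bigr\|\le 2\varepsilon$ by splitting each signed sum as $\sum_{F^{+}}x_n-\sum_{F^{-}}x_n$ over two finite subsets of $\{N+1,N+2,\dots\}$; hence $\sum_n a_nx_n$ is Cauchy. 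And $(iii)\Rightarrow(ii)$ is immediate, a sign sequence being a scalar sequence bounded by $1$.

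The content is in getting (UC) out of $(i)$ and out of $(ii)$, which I would do by contraposition; this is the step I expect to be the main obstacle. Assuming $\lnot(\mathrm{UC})$, a one-at-a-time extraction produces $\varepsilon_0>0$ and finite sets $F_1,F_2,\dots$ with $\max F_j<\min F_{j+1}$ and $\bigl\|\sum_{n\in F_j}x_n\bigr\|\ge\varepsilon_0$ for every $j$. For $\lnot(i)$: set $\max F_0=0$ and $G_j=\{\max F_{j-1}+1,\dots,\max F_j\}\setminus F_j$, and list $\mathbb N$ in the order $G_1,F_1,G_2,F_2,G_3,F_3,\dots$ (each block increasingly); since $\max F_j\to\infty$ this is a bijection $\varphi$, and just before the $F_j$-block the partial sum of $\sum_n x_{\varphi(n)}$ equals $\sum_{n=1}^{\max F_j}x_n-\sum_{n\in F_j}x_n$ while just after it equals $\sum_{n=1}^{\max F_j}x_n$, so two consecutive partial sums differ by $\sum_{n\in F_j}x_n$ (of norm $\ge\varepsilon_0$) for every $j$ --- not Cauchy. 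For $\lnot(ii)$: put $I_j=\{\min F_j,\dots,\max F_j\}$ (pairwise disjoint since $\max F_j<\min F_{j+1}$); from $\sum_{n\in F_j}x_n=\tfrac12\sum_{n\in I_j}x_n+\tfrac12\bigl(\sum_{n\in F_j}x_n-\sum_{n\in I_j\setminus F_j}x_n\bigr)$ and the triangle inequality, one of the two sign patterns $\eta^{(j)}$ on $I_j$ occurring here (all $+1$; or $+1$ on $F_j$, $-1$ on $I_j\setminus F_j$) satisfies $\bigl\|\sum_{n\in I_j}\eta^{(j)}_n x_n\bigr\|\ge\varepsilon_0$; taking $\theta_n=\eta^{(j)}_n$ for $n\in I_j$ and $\theta_n=1$ elsewhere, the partial sums of $\sum_n\theta_nx_n$ jump by $\sum_{n\in I_j}\theta_nx_n$ across each $I_j$ and so are not Cauchy. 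The genuine difficulty is exactly this passage from ``bad finite sets'', possibly sparse and far from being intervals, to an honest divergent rearrangement or divergent signed series; everything else is bookkeeping with the Cauchy criterion. (Condition $(i)$ as stated only requires each rearrangement to converge, which is all that is used; that the limits coincide is an easy separate point once (UC) is available.)
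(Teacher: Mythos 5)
Your proof is correct. Note that the paper does not prove this lemma at all --- it is quoted from Singer \cite[Lemma~16.1]{S} --- so there is no in-paper argument to compare against; your route through the ``unconditional Cauchy condition'' is exactly the classical one found in the cited references, and every step checks out: the convex-hull/vertex bound for $(\mathrm{UC})\Rightarrow(iii)$, the interleaved rearrangement for $\lnot(\mathrm{UC})\Rightarrow\lnot(i)$, and the averaging identity producing a bad sign pattern on the intervals $I_j$ for $\lnot(\mathrm{UC})\Rightarrow\lnot(ii)$ are all sound (the only quibble is the phrase ``two consecutive partial sums'' where you mean the partial sums at the start and end of an $F_j$-block, which is what your argument actually uses).
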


A basis $(e_n)_{n=1}^\infty$ of a Banach space $X$ with the biorthogonal functionals $(e_n^*)_{n=1}^\infty$ is called an \emph{unconditional basis} if the series $\sum_{n=1}^\infty e_n^*(x) \, e_n$ converges unconditionally for every $x \in X$. In this case, for any subset $I \subseteq \mathbb N$ the projection $P_I x = \sum_{n \in I} e_n^*(x) \, e_n$ is well defined on $X$ and bounded, as well as for any sequence of signs $\Theta = (\theta_n)_{n=1}^\infty$, $\theta_n = \pm 1$ the operator $M_\Theta x = \sum_{n=1}^\infty \theta_n e_n^*(x) \, e_n$. Moreover, $\sup_I \|P_I\| \leq \sup_\Theta \|M_\Theta\| \leq 2 \sup_I \|P_I\| < \infty$ and the number $\sup_\Theta \|M_\Theta\|$ is called the \emph{unconditional constant} of the unconditional basis $(e_n)_{n=1}^\infty$. An unconditional basis with unconditional constant $1$ is said to be $1$-\emph{unconditional}. A basis which is not unconditional is called a \emph{conditional} basis. A sequence which is an unconditional (resp., conditional) basis in its closed linear span is called an \emph{unconditional basic sequence} (resp., \emph{conditional basic sequence}).

Every infinite dimensional Banach space contains a basic sequence, however, not every infinite dimensional separable Banach space contains a basis. The classical Banach spaces $L_1[0,1]$ and $C[0,1]$ contain bases, however they cannot be isomorphically embedded in a Banach space with an unconditional basis. The standard basis $e_n = (\underbrace{0, \ldots, 0}\limits_{n-1}, 1, 0, 0, \ldots)$ of the spaces $c_0$ and $\ell_p$ for $1 \leq p < \infty$ are $1$-unconditional.

We remark that every $1$-unconditional basic sequence $(e_n)_{n=1}^\infty$ in a Hilbert space $H$ is orthogonal, because the inequality $\|e_n + e_m\| = \|e_n - e_m\|$ yields
$$
(e_n,e_m) = \frac14 \Bigl( \|e_n + e_m\| - \|e_n - e_m\| \Bigr) = 0
$$
if $n \neq m$.

We also need the following statement from \cite[Proposition~1.c.7]{LTI} which is true for real Banach spaces.

\begin{lemma} \label{le:propLT}
Let $(e_n)_{n=1}^\infty$ be an unconditional basic sequence in a Banach space $X$ with the unconditional constant $M$. Let $(a_n)_{n=1}^\infty$ be any sequence of scalars for which the series $\sum_{n=1}^\infty a_n e_n$ converges. Then for any bounded sequence of scalars $(\lambda_n)_{n=1}^\infty$ one has
$$
\Bigl\| \sum_{n=1}^\infty \lambda_n a_n e_n \Bigr\| \leq M \sup_n |\lambda_n| \Bigl\| \sum_{n=1}^\infty a_n e_n \Bigr\|.
$$
\end{lemma}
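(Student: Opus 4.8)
The plan is to reduce everything, by homogeneity and a limiting argument, to a purely finite-dimensional convexity fact about the cube $[-1,1]^N$. First I would normalize: if $s := \sup_n|\lambda_n| = 0$ there is nothing to prove, so assume $s>0$ and set $\mu_n = \lambda_n/s$, so that $|\mu_n|\le 1$ for all $n$; multiplying the conclusion through by $s$, it is enough to prove $\bigl\|\sum_n \mu_n a_n e_n\bigr\| \le M \bigl\|\sum_n a_n e_n\bigr\|$. Before manipulating the left-hand series I would record that it actually converges: since $(e_n)$ is an unconditional basis of $[e_n]_{n=1}^\infty$ the series $\sum_n a_n e_n$ converges unconditionally, so Lemma~\ref{le:sin}$(iii)$ applies to the bounded scalars $(\mu_n)$. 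Writing $y = \sum_{n=1}^\infty a_n e_n$ and $y_N = \sum_{n=1}^N a_n e_n$, we then have $y_N \to y$ and $\sum_{n=1}^N \mu_n a_n e_n \to \sum_{n=1}^\infty \mu_n a_n e_n$ in norm.

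The core step is the finite estimate $\bigl\|\sum_{n=1}^N \mu_n a_n e_n\bigr\| \le M\|y_N\|$ for each $N$. Here I would use that, over the reals, $[-1,1]^N$ is the convex hull of its vertex set $\{-1,1\}^N$, so $(\mu_1,\dots,\mu_N) = \sum_j c_j\,(\theta_1^{(j)},\dots,\theta_N^{(j)})$ for finitely many sign vectors $\theta^{(j)}$ and weights $c_j \ge 0$ with $\sum_j c_j = 1$. Extending each $\theta^{(j)}$ to an infinite sign sequence $\Theta^{(j)}$ (say by ones) gives $M_{\Theta^{(j)}}y_N = \sum_{n=1}^N \theta_n^{(j)} a_n e_n$, hence
$$
\sum_{n=1}^N \mu_n a_n e_n \;=\; \sum_j c_j\, M_{\Theta^{(j)}} y_N ,
$$
and the triangle inequality together with $M = \sup_\Theta\|M_\Theta\|$ yields $\bigl\|\sum_{n=1}^N \mu_n a_n e_n\bigr\| \le \sum_j c_j \|M_{\Theta^{(j)}}\|\,\|y_N\| \le M\|y_N\|$.

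Finally I would let $N\to\infty$: by the two norm convergences noted above and continuity of the norm, $\bigl\|\sum_{n=1}^\infty \mu_n a_n e_n\bigr\| = \lim_N \bigl\|\sum_{n=1}^N \mu_n a_n e_n\bigr\| \le \lim_N M\|y_N\| = M\|y\|$, and rescaling by $s$ gives the statement. I do not expect a real obstacle here; the only genuinely delicate point is that the convexity trick is special to the real scalar field (the polydisc is \emph{not} the convex hull of the torus), which is precisely the hypothesis we are granted, and the secondary point of verifying convergence of $\sum_n \mu_n a_n e_n$ before rearranging it, which is handled by Lemma~\ref{le:sin}.
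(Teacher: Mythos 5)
Your proof is correct. Note that the paper itself offers no proof of Lemma~\ref{le:propLT}: it simply cites \cite[Proposition~1.c.7]{LTI}. The argument in Lindenstrauss--Tzafriri is the standard one via a norming functional: pick $x^*$ with $\|x^*\|=1$ attaining the norm of $\sum_n \lambda_n a_n e_n$, bound $\sum_n \lambda_n a_n x^*(e_n)$ by $\sup_n|\lambda_n|\sum_n |a_n x^*(e_n)|$, and realize the latter sum as $x^*\bigl(\sum_n \theta_n a_n e_n\bigr)$ for a single well-chosen sign sequence $\theta_n = \operatorname{sign}\bigl(a_n x^*(e_n)\bigr)$. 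Your route replaces this with the vertex decomposition of the cube $[-1,1]^N$, which is in effect a proof of the paper's Lemma~\ref{le:propMMP} specialized to $x_n = a_n e_n$, followed by the bound $\max_\theta\|M_\Theta y_N\|\le M\|y_N\|$ and a passage to the limit; the two approaches are of comparable difficulty and both hinge on the real scalar field (in the norming-functional proof through the choice of signs, in yours through the convex-hull identity), a point you correctly flag. Your preliminary verification that $\sum_n \mu_n a_n e_n$ converges via Lemma~\ref{le:sin}$(iii)$, and the observation that $M_{\Theta^{(j)}}y_N$ truncates correctly because $e_n^*(y_N)=0$ for $n>N$, close the only gaps that could otherwise arise. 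A minor stylistic remark: since your finite-dimensional core step is exactly Lemma~\ref{le:propMMP}, one could shorten the write-up by invoking that lemma directly and keeping only the normalization, convergence, and limiting steps.
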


Furthermore, we need the following finite dimensional version of Lemma~\ref{le:propLT}.

\begin{lemma} \label{le:propMMP}
Let $(x_n)_{n=1}^N$ be a finite sequence of elements in a real Banach space $X$. Then for any collection of scalars $(\lambda_n)_{n=1}^N$ one has
$$
\Bigl\| \sum_{n=1}^N \lambda_n x_n \Bigr\| \leq \max_n |\lambda_n| \max_{\theta_n = \pm 1} \Bigl\| \sum_{n=1}^N \theta_n x_n \Bigr\|.
$$
\end{lemma}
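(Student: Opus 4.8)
The plan is to reduce the estimate to a convexity argument: the vector $\sum_{n=1}^N \lambda_n x_n$, after rescaling, lies in the convex hull of the $2^N$ sign-vectors $\sum_{n=1}^N \theta_n x_n$. First I would dispose of the trivial case in which all $\lambda_n = 0$ (both sides vanish), so assume $c := \max_n |\lambda_n| > 0$. Replacing $\lambda_n$ by $\lambda_n/c$ we may assume $|\lambda_n| \le 1$ for all $n$, and it suffices to show $\|\sum \lambda_n x_n\| \le \max_{\theta} \|\sum \theta_n x_n\|$. The key observation is the following elementary identity: for any scalars $\lambda_1, \dots, \lambda_N$ with $|\lambda_n| \le 1$,
\[
(\lambda_1, \dots, \lambda_N) = \sum_{\theta \in \{-1,1\}^N} c_\theta \, (\theta_1, \dots, \theta_N), \qquad c_\theta = \prod_{n=1}^N \frac{1 + \theta_n \lambda_n}{2},
\]
where the coefficients $c_\theta$ are nonnegative and sum to $1$ (the latter because $\sum_\theta c_\theta = \prod_n \frac{(1-\lambda_n)+(1+\lambda_n)}{2} = 1$). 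This is just the statement that the cube $[-1,1]^N$ is the convex hull of its vertices, written out explicitly; one checks the $n$-th coordinate by noting that only the two values $\theta_n = \pm 1$ enter and the partial sums over the other coordinates telescope to $1$.

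Granting the identity, I would apply the linear functionals $e_n^*$-style bookkeeping directly: since $\lambda_n = \sum_\theta c_\theta \theta_n$ for each $n$, linearity gives
\[
\sum_{n=1}^N \lambda_n x_n = \sum_{n=1}^N \Bigl( \sum_\theta c_\theta \theta_n \Bigr) x_n = \sum_\theta c_\theta \Bigl( \sum_{n=1}^N \theta_n x_n \Bigr).
\]
Taking norms, using the triangle inequality and $c_\theta \ge 0$, $\sum_\theta c_\theta = 1$, yields
\[
\Bigl\| \sum_{n=1}^N \lambda_n x_n \Bigr\| \le \sum_\theta c_\theta \Bigl\| \sum_{n=1}^N \theta_n x_n \Bigr\| \le \max_{\theta_n = \pm 1} \Bigl\| \sum_{n=1}^N \theta_n x_n \Bigr\|.
\]
Undoing the normalization (multiplying through by $c = \max_n |\lambda_n|$) gives the claimed inequality.

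The only real content is verifying the convex-combination identity, and even that is routine once phrased correctly; I expect the main (minor) obstacle to be presenting the telescoping computation of the coefficients cleanly — in particular making transparent why $\sum_\theta c_\theta = 1$ and why the mixed terms cancel so that each coordinate is reproduced exactly. An alternative, if one prefers to avoid writing out the product formula, is to argue by induction on $N$: peel off the last coordinate by writing $\sum_{n=1}^N \lambda_n x_n = \frac{1+\lambda_N}{2}\bigl(\sum_{n<N}\lambda_n x_n + x_N\bigr) + \frac{1-\lambda_N}{2}\bigl(\sum_{n<N}\lambda_n x_n - x_N\bigr)$, apply the inductive hypothesis to each of the two inner sums (which involve $N-1$ coefficients together with a fixed $\pm x_N$ term — formally absorb $\pm x_N$ by enlarging the system), and recombine. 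Both routes are short; I would present the convexity version as it is the most self-contained.
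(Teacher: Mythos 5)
Your proof is correct and complete. Note that the paper does not actually write out a proof of this lemma: it only remarks that the argument for Proposition~1.c.7 in Lindenstrauss--Tzafriri can be modified, or that the statement follows from Lemma~2.3 of the cited Mart\'{i}n--Mer\'{i}--Popov paper. The Lindenstrauss--Tzafriri route is a duality argument: pick a norming functional $x^*$ for $\sum_n \lambda_n x_n$, estimate $x^*\bigl(\sum_n \lambda_n x_n\bigr) = \sum_n \lambda_n x^*(x_n) \leq \max_n|\lambda_n| \sum_n |x^*(x_n)| = \max_n|\lambda_n|\, x^*\bigl(\sum_n \theta_n x_n\bigr)$ with $\theta_n = \mathrm{sign}\, x^*(x_n)$, and bound the last expression by the right-hand side. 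Your route instead writes $(\lambda_1,\dots,\lambda_N)/\max_n|\lambda_n|$ as an explicit convex combination of the vertices of the cube $[-1,1]^N$ with weights $c_\theta = \prod_n \tfrac{1+\theta_n\lambda_n}{2}$, and finishes with the triangle inequality. The coefficient identities you need ($c_\theta \geq 0$, $\sum_\theta c_\theta = 1$, $\sum_\theta c_\theta \theta_n = \lambda_n$) all follow from the factorization of the product over coordinates, exactly as you indicate, so there is no gap. What each approach buys: the duality proof is shorter to write but invokes Hahn--Banach (existence of a norming functional), whereas yours is entirely elementary and makes visible the geometric content --- the set of admissible coefficient vectors is the convex hull of the sign vectors --- which is also the mechanism behind Proposition~\ref{pr:closed} and the extreme-point discussion later in the paper. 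Either version would serve as a self-contained proof where the paper currently only has a citation.
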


Formally Lemma~\ref{le:propMMP} does not follow from~\ref{le:propLT}, however its proof provided in the Lindenstrauss-Tzafriri book could be modified to prove Lemma~\ref{le:propMMP}. Besides, Lemma~\ref{le:propMMP} follows from Lemma~2.3 of \cite{MMP}.

\subsubsection{Boundedly complete bases}

A basis $(e_n)_{n=1}^\infty$ of a Banach space $X$ is called \emph{boundedly complete} if for any sequence of scalars $(a_n)_{n=1}^\infty$ the boundedness of the partial sums $\sup_n \bigl\| \sum_{k=1}^n a_k e_k \bigr\| < \infty$ implies the convergence of the series $\sum_{n=1}^\infty a_n e_n$. Every basis of a reflexive Banach space is boundedly complete \cite[Theorem~1.b.5]{LTI}. The standard basis of the nonreflexive space $\ell_1$ is evidently boundedly complete as well. However, every Banach space with a boundedly complete basis is isomorphic to a conjugate space \cite[Theorem~1.b.4]{LTI}. A kind of converse statement is also true: by a deep result of Johnson, Rosenthal and Zippin \cite{JRT}, if a conjugate Banach space $X^*$ has a basis then $X^*$ contains a boundedly complete basis. Finally, an unconditional basis of a Banach space $X$ is boundedly complete if and only if $X$ contains no subspace isomorphic to $c_0$ \cite[Theorem~1.c.10]{LTI}.

\subsection{A characterization of precompactness of sets in a Banach space with a basis}

We provide below a convenient characterization of precompactness in terms of biorthogonal functionals. Informally speaking, it asserts that the precompactness of a subset $A$ of a Banach space $X$ is equivalent to the uniform convergence of the Fourier series of elements of $A$ with respect to a given basis. Most likely, this statement is not new, however we do not know a citation, so we provide a complete proof.

\begin{lemma} \label{le:crcomp}
Let $X$ be a Banach space with a basis $(e_n)_{n=1}^\infty$ and the biorthogonal functionals $(e_n^*)_{n=1}^\infty$. A bounded set $A \subset X$ is precompact if and only if
\begin{equation} \label{eq:crcomp2}
\lim_{N \to \infty} \sup_{x \in A} \Bigl\| \sum_{n > N} e_n^*(x) \, e_n \Bigr\| = 0.
\end{equation}
\end{lemma}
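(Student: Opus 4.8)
The plan is to prove both implications directly from the Hausdorff criterion stated in the introduction, using the uniform boundedness of the basis projections $P_N$ (with $K = \sup_N \|P_N\| < \infty$) as the main quantitative tool.

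\smallskip

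\textbf{Necessity.} Assume $A$ is precompact; I want \eqref{eq:crcomp2}. Fix $\varepsilon > 0$ and choose a finite $\varepsilon$-net $x_1, \dots, x_m \in A$. For each fixed $j$, since $x_j = \sum_n e_n^*(x_j) e_n$, the tails $\|\sum_{n>N} e_n^*(x_j) e_n\|$ tend to $0$ as $N \to \infty$; taking the maximum over the finitely many $j \le m$, there is $N_0$ so that $\|\sum_{n>N} e_n^*(x_j) e_n\| \le \varepsilon$ for all $N \ge N_0$ and all $j$. Now for arbitrary $x \in A$ pick $x_j$ with $\|x - x_j\| \le \varepsilon$. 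Writing $\sum_{n>N} e_n^*(x) e_n = (I - P_N)x$ and similarly for $x_j$, and noting $\|I - P_N\| \le 1 + K$, we get
$$
\Bigl\| \sum_{n>N} e_n^*(x) e_n \Bigr\| \le \bigl\| (I - P_N)(x - x_j) \bigr\| + \Bigl\| \sum_{n>N} e_n^*(x_j) e_n \Bigr\| \le (1+K)\varepsilon + \varepsilon
$$
for all $N \ge N_0$. Since $\varepsilon$ was arbitrary, this gives \eqref{eq:crcomp2}.

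\smallskip

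\textbf{Sufficiency.} Assume $A$ is bounded, say $\|x\| \le R$ for all $x \in A$, and that \eqref{eq:crcomp2} holds; I want $A$ precompact. Fix $\varepsilon > 0$. By \eqref{eq:crcomp2} choose $N$ so that $\sup_{x \in A} \|(I - P_N)x\| \le \varepsilon/2$. The set $P_N(A)$ lies in the finite-dimensional subspace $[e_1, \dots, e_N]$ and is bounded (by $KR$), hence precompact; let $y_1, \dots, y_m$ be a finite $(\varepsilon/2)$-net for $P_N(A)$ — to keep the net inside $A$ as the definition demands, choose each $y_i = P_N(x_i)$ for suitable $x_i \in A$ and replace $y_i$ by $x_i$, absorbing the extra error into the estimate (or simply relax to an $\varepsilon$-net of points of $A$). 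Then for any $x \in A$, pick $x_i$ with $\|P_N x - P_N x_i\| \le \varepsilon/2$; we obtain
$$
\|x - x_i\| \le \|(I - P_N)x\| + \|P_N x - P_N x_i\| + \|(I - P_N)x_i\| \le \tfrac{\varepsilon}{2} + \tfrac{\varepsilon}{2} + \tfrac{\varepsilon}{2},
$$
so $\{x_1, \dots, x_m\}$ is a finite $\tfrac{3\varepsilon}{2}$-net for $A$ contained in $A$. As $\varepsilon$ is arbitrary, $A$ is precompact.

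\smallskip

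I do not expect a serious obstacle here; the only point requiring mild care is the bookkeeping in the definition of precompactness, which insists the $\varepsilon$-net consist of points of $A$ itself — handled above by pulling back an $\varepsilon$-net of $P_N(A)$ to points of $A$ and adjusting constants — together with remembering to invoke the boundedness hypothesis on $A$ in the sufficiency direction (it is genuinely needed, as \eqref{eq:crcomp2} alone does not even make sense without the series converging, but boundedness also controls $\|P_N x\|$ uniformly).
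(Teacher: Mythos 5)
Your proof is correct. The sufficiency direction is essentially the paper's own argument (project onto $[e_1,\dots,e_N]$, take a finite net there, pull it back to points of $A$, and absorb the tail errors); your $3\varepsilon/2$ versus the paper's exact $\varepsilon$ is immaterial. The necessity direction, however, is genuinely different in route. The paper argues by contradiction: assuming \eqref{eq:crcomp2} fails with gap $2\delta$, it extracts a sequence $x_k \in A$ and a block basis $(u_k)$ with $\|x_k - u_k\|$ small and $\|u_k\|$ bounded below, then uses the basis constant to show the $x_k$ are $\alpha$-separated, contradicting precompactness. You instead give a direct argument: take a finite $\varepsilon$-net $x_1,\dots,x_m$ of $A$, choose $N_0$ making all $m$ tails small simultaneously, and transfer this to arbitrary $x \in A$ via the uniform bound $\|I - P_N\| \leq 1 + K$. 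Both proofs ultimately rest on the uniform boundedness of the basis projections, but yours is shorter and yields an explicit quantitative bound $\sup_{x\in A}\|(I-P_N)x\| \leq (2+K)\varepsilon$ for $N \geq N_0$, whereas the paper's construction is the one you would need if you wanted to extract a separated block-basis sequence from a non-compact set (which is of independent interest but is not needed for the lemma itself). Your closing remark about boundedness is also accurate: precompactness gives it for free in the necessity direction, and in the sufficiency direction it is what makes $P_N(A)$ a bounded subset of a finite-dimensional space.
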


\begin{proof}
Let $A$ be precompact. Assuming \eqref{eq:crcomp2} is false, we choose $\delta > 0$ so that
\begin{equation} \label{eq:crcomp3}
\limsup_{N \to \infty} \sup_{x \in A} \Bigl\| \sum_{n > N} e_n^*(x) \, e_n \Bigr\| > 2 \delta.
\end{equation}

Then we construct a block basis $u_k = \sum_{n=n_k+1}^{n_{k+1}} a_n e_n$, $a_k \in \mathbb R$, $0 \leq n_1 < n_2 < \ldots$ and a sequence $x_k \in A$ so that
\begin{equation} \label{eq:cond}
\|u_k\| \geq \delta \, \frac{4K+1}{2K+1} \,\,\,\,\, \mbox{и} \,\, \|x_k - u_k\| \leq \frac{\delta}{2K+1} \, \,\,\, \mbox{при} \,\, k = 1,2, \ldots,
\end{equation}
where $K$ is the basis constant of $(e_n)_{n=1}^\infty$. Choose by \eqref{eq:crcomp3} a number $n_1 \geq 0$ and $x_1 \in A$ so that $\bigl\| \sum_{n > n_1} e_n^*(x_1) \, e_n \bigr\| \geq 2 \delta.$ Then pick a number $m_1 > n_1$ such that $\bigl\| \sum_{n > m_1} e_n^*(x_1) \, e_n \bigr\| < \frac{\delta}{2K+1} \, .$ Then for $u_1 = \sum_{n = n_1+1}^{m_1} e_n^*(x_1) \, e_n$ we obtain $\|x_1 - u_1\| < \frac{\delta}{2K+1}$ and
$$
\|u_1\| \geq \Bigl\| \sum_{n > n_1} e_n^*(x_1) \, e_n \Bigr\| - \Bigl\| \sum_{n > m_1} e_n^*(x_1) \, e_n \Bigr\| \geq 2 \delta - \frac{\delta}{2K+1} = \delta \, \frac{4K+1}{2K+1} \, .
$$

On the second step, choose by \eqref{eq:crcomp3} an integer $n_2 > m_1$ and $x_2 \in A$ so that $\bigl\| \sum_{n > n_2} e_n^*(x_2) \, e_n \bigr\| \geq 2 \delta,$ and then choose $m_2 > n_2$ so that $\bigl\| \sum_{n > m_2} e_n^*(x_2) \, e_n \bigr\| < \frac{\delta}{2K+1} \, .$ Then for $u_2 = \sum_{n = n_2+1}^{m_2} e_n^*(x_2) \, e_n$ we get $\|x_2 - u_2\| < \frac{\delta}{2K+1}$ and
$$
\|u_2\| \geq \Bigl\| \sum_{n > n_1} e_n^*(x_2) \, e_n \Bigr\| - \Bigl\| \sum_{n > n_2} e_n^*(x_2) \, e_n \Bigr\| \geq 2 \delta - \frac{\delta}{2K+1} = \delta \, \frac{4K+1}{2K+1} \, .
$$

Continuing analogously the recursive procedure of choice for $k = 3,4, \ldots$, we obtain a block basis $u_k = \sum_{n=n_k+1}^{n_{k+1}} a_n e_n$ and a sequence $x_k \in A$ for which \eqref{eq:cond} holds.

Fix any integers $k < m$ and observe that, by \eqref{eq:cond},
$$
\delta \, \frac{4K+1}{2K+1} \leq \|u_k\| \leq K \|u_k - u_m\|.
$$

Hence,
\begin{align*}
\|x_k - x_m\| &\geq \|u_k - u_m\| - \|x_k - u_k\| - \|x_m - u_m\|\\
&\geq \frac{\delta}{K} \, \frac{4K+1}{2K+1} - \frac{\delta}{2K+1} - \frac{\delta}{2K+1}\\
&= \Bigl( 2 + \frac1K \Bigr) \, \frac{\delta}{2K+1} = \alpha > 0.
\end{align*}
Finally the sequence $(x_k)_{k=1}^\infty$ in $A$ appears to be $\alpha$-separate, which contradicts the precompactness of $A$. So, \eqref{eq:crcomp2} is proved.

Let for a bounded set $A$ condition \eqref{eq:crcomp2} hold. We prove that $A$ is precompact. Fix any $\varepsilon > 0$ and construct a finite $\varepsilon$-net in $A$. Choose by \eqref{eq:crcomp2} a number $N \in \mathbb N$ such that $\bigl\| \sum_{n > N} e_n^*(x) \, e_n \bigr\| \leq \varepsilon/3$ for all $x \in A$. Denote by $P_N$ the basis projection of $X$ defined by $P_N x = \sum_{n=1}^N e_n^*(x) \, e_n$ for all $x \in X$. Since the image $P_N(A)$ is a bounded set in a finite dimensional normed space, there is a finite $\varepsilon/3$-net $P_N(x_1), \ldots, P_N(x_m)$ for $P_N(A)$, where $x_1, \ldots, x_m \in A$. Show that $x_1, \ldots, x_m \in A$ is an $\varepsilon$-net for $A$. Indeed, let $x \in A$. Choose $k \in \{1, \ldots, m\}$ so that $\|P_N x - P_N x_k \| \leq \varepsilon/3$. Then
\begin{align*}
\|x - x_k\| &\leq \|P_N (x - x_k)\| + \|x - P_N x\| + \|x_k - P_N x_k\|\\
&=\|P_N x - P_N x_k\| + \Bigl\| \sum_{n > N} e_n^*(x) \, e_n \Bigr\| + \Bigl\| \sum_{n > N} e_n^*(x_k) \, e_n \Bigr\|\\
&\leq \frac{\varepsilon}{3} + \frac{\varepsilon}{3} + \frac{\varepsilon}{3} = \varepsilon.
\end{align*}
\end{proof}

\section{Bricks}

The notion of entropy is based on the concept of bricks in a Banach space, that is, a box with sides that are parallel to the coordinate hyperplanes with respect to a given basis. The latter concept we develop in this section.

\subsection{Definition and properties}

Let $X$ be a Banach space with a basis $\mathcal B = (e_n)_{n=1}^\infty$ and biorthogonal functionals $e_k^* \in X_0^*$, and let $\mathcal E = (\varepsilon_n)_{n=1}^\infty$ be a sequence of nonnegative numbers.

\begin{definition}
A \emph{brick} (more precisely, the \emph{brick corresponding to the pair} $(\mathcal B, \mathcal E)$) is defined to be the following set
$$
K_{\mathcal B, \mathcal E} = \bigl\{ x \in X_0: \,\, (\forall n \in \mathbb N) \,\, |e_n^*(x)| \leq \varepsilon_n \bigr\}.
$$

The numbers $\varepsilon_n$ are called the \emph{half-hight} of the brick $K_{\mathcal B, \mathcal E}$.
\end{definition}

In other words, $K_{\mathcal B, \mathcal E}$ consists of all sums of convergent series $x = \sum_{n=1}^\infty a_n e_n$ with coefficients satisfying $|a_n| \leq \varepsilon_n$ for all $n$.

A simple observation: any brick $K_{\mathcal B, \mathcal E}$ coincides with the brick $K_{\mathcal B', \mathcal E'}$, where $\mathcal B' = (e_n')_{n \in M}$ is the normalized basis $e_n' = \|e_n\|^{-1} e_n$, $n = 1,2, \ldots$, and $\mathcal B' = (e_n')_{n \in M}$ the half-height $\varepsilon_n' = \varepsilon_n \|e_n\|$, $n = 1,2, \ldots$. So, \textbf{we consider bricks constructed by normalized bases only.}

\begin{definition}
A brick constructed by an unconditional basis, $1$-unconditional basis, or boundedly complete basis is called an \emph{unconditional, $1$-unconditional} or respectively, a \emph{boundedly complete} brick.
\end{definition}

Recall that a subset $A$ of a linear space $X$ is called \emph{absolutely convex} provided for all $m \in \mathbb N$, $x_1, \ldots, x_m \in A$ and $\lambda_1, \ldots, \lambda_m \in \mathbb K$ the inequality $|\lambda_1| + \ldots + |\lambda_m| \leq 1$ implies $\lambda_1 x_1 + \ldots + \lambda_m x_m \in A$.

\begin{prop} \label{pr:closed}
Every brick in a Banach space $X$ is an absolutely convex closed subset of $X$.
\end{prop}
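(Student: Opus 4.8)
The plan is to verify the two properties separately, since they're largely independent. For absolute convexity, take finitely many points $x^{(1)},\ldots,x^{(m)}$ in the brick $K_{\mathcal B,\mathcal E}$ and scalars $\lambda_1,\ldots,\lambda_m$ with $\sum_j|\lambda_j|\le 1$; I want to show $y=\sum_j\lambda_j x^{(j)}\in K_{\mathcal B,\mathcal E}$. The key point is that $e_n^*$ is a continuous linear functional, so $e_n^*(y)=\sum_j\lambda_j e_n^*(x^{(j)})$, and hence $|e_n^*(y)|\le\sum_j|\lambda_j|\,|e_n^*(x^{(j)})|\le\sum_j|\lambda_j|\,\varepsilon_n\le\varepsilon_n$ for every $n$. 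That is exactly the defining condition for membership in $K_{\mathcal B,\mathcal E}$. One should note that $y$ does lie in $X_0$ (here $X_0=X$, or the closed span of the basis) because it is a finite linear combination of elements of $X_0$; this is automatic and needs no comment beyond a phrase.

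For closedness, I would take a sequence $y^{(k)}\in K_{\mathcal B,\mathcal E}$ with $y^{(k)}\to y$ in norm and show $y\in K_{\mathcal B,\mathcal E}$. Again the continuity of each biorthogonal functional is the whole engine: $e_n^*(y)=\lim_k e_n^*(y^{(k)})$, and since $|e_n^*(y^{(k)})|\le\varepsilon_n$ for all $k$, passing to the limit gives $|e_n^*(y)|\le\varepsilon_n$. As this holds for every $n$, and $y\in X_0$ because $X_0$ is closed and $y$ is a norm-limit of elements of $X_0$, we conclude $y\in K_{\mathcal B,\mathcal E}$. (Equivalently, one can phrase this as $K_{\mathcal B,\mathcal E}=\bigcap_{n}(e_n^*)^{-1}([-\varepsilon_n,\varepsilon_n])$, an intersection of closed sets, since each $e_n^*$ is continuous; this packages the argument in one line.)

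There is really no main obstacle here — both parts are immediate consequences of the continuity and linearity of the biorthogonal functionals $e_n^*$, and the fact that the constraints defining a brick are "coordinatewise" and therefore interact well with limits and convex combinations. The only thing to be mildly careful about is the ambient-membership condition $x\in X_0$ in the definition of the brick, but this is handled for free: finite linear combinations stay in $X_0$, and norm-limits of points of $X_0$ stay in $X_0$ because $X_0$ is closed. I would write the whole proof in a few lines.
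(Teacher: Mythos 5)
Your proof is correct and follows essentially the same route as the paper: the coordinatewise estimate $|e_n^*(\sum_j \lambda_j x^{(j)})| \le \sum_j |\lambda_j|\,\varepsilon_n \le \varepsilon_n$ for absolute convexity, and continuity of the biorthogonal functionals for closedness (which the paper states in one sentence and you merely spell out, e.g.\ via the intersection $\bigcap_n (e_n^*)^{-1}([-\varepsilon_n,\varepsilon_n])$). Your extra care about membership in $X_0$ is harmless and correct.
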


\begin{proof}
Let $\mathcal B = (e_n)_{n=1}^\infty$ be a normalized basis of $X$ with the biorthogonal functionals $e_k^* \in X^*$ and $\mathcal E = (\varepsilon_n)_{n=1}^\infty$. Assume $m \in \mathbb N$, $x_1, \ldots, x_m \in K_{\mathcal B, \mathcal E}$, $\lambda_1, \ldots, \lambda_m \in \mathbb K$ and $|\lambda_1| + \ldots + |\lambda_m| \leq 1$. Then for every $n \in \mathbb N$ one has
\begin{align*}
|e_n^*(\lambda_1 x_1 + \ldots + \lambda_m x_m)| &\leq |\lambda_1| |e_n^*(x_1)| + \ldots + |\lambda_m | |e_n^*(x_m)|\\
&\leq |\lambda_1| \, \varepsilon_n + \ldots + |\lambda_m | \, \varepsilon_n \leq \varepsilon_n.
\end{align*}
Thus, $K_{\mathcal B, \mathcal E}$ is absolutely convex. By continuity of $e_n^*$'s, $K_{\mathcal B, \mathcal E}$ is closed.
\end{proof}

One can deduce from Lemma~\ref{le:crcomp} that if $K_{\mathcal B, \mathcal E}$ is compact then $\lim_{n \to \infty} \varepsilon_n = 0$, and also if $\sum_{n=1}^\infty \varepsilon_n < \infty$, then $K_{\mathcal B, \mathcal E}$ is compact. We are not going to provide details because of the more general characterization of compactness for bricks below (Theorem~\ref{th:crcomp}).

\begin{definition} \label{def:solid}
A brick $K_{\mathcal B, \mathcal E}$ is said to be \emph{solid} if for each $x \in K_{\mathcal B, \mathcal E}$ and each numbers $a_1, a_2, \ldots \in \mathbb K$ such that $|a_n| \leq |e_n^*(x)|$ for all $n \in \mathbb N$ the series $\sum_{n=1}^\infty a_n e_n$ converges\footnote{and hence, its sum belongs to $K_{\mathcal B, \mathcal E}$}.
\end{definition}

\begin{prop} \label{pr:teles}
Let $X$ be a Banach space with a normalized basis $\mathcal B = (e_n)_{n=1}^\infty$ and biorthogonal functionals $e_k^* \in X_0^*$, and let $\mathcal E = (\varepsilon_n)_{n=1}^\infty$. Then each of the following assertions is sufficient for $K_{\mathcal B, \mathcal E}$ to be solid
\begin{enumerate}
  \item $K_{\mathcal B, \mathcal E}$ is unconditional;
  \item $\sum_{n=1}^\infty \varepsilon_n < \infty$.
\end{enumerate}
\end{prop}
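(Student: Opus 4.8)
The plan is to verify the defining condition of solidity, Definition~\ref{def:solid}, under each hypothesis separately, using the already-established convergence criteria. So fix $x \in K_{\mathcal B, \mathcal E}$ with expansion $x = \sum_{n=1}^\infty e_n^*(x) e_n$, and fix scalars $(a_n)_{n=1}^\infty$ with $|a_n| \leq |e_n^*(x)|$ for all $n$; we must show $\sum_{n=1}^\infty a_n e_n$ converges. The only nontrivial point to check is convergence of this series; once it converges, its partial sums obey $|e_n^*(\sum a_k e_k)| = |a_n| \leq |e_n^*(x)| \leq \varepsilon_n$, so the sum automatically lies in $K_{\mathcal B, \mathcal E}$, as noted in the footnote.

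For part (2): since $\|e_n\| = 1$ and $|a_n| \leq |e_n^*(x)| \leq \varepsilon_n$, we have $\sum_{n=1}^\infty \|a_n e_n\| \leq \sum_{n=1}^\infty \varepsilon_n < \infty$, so the series $\sum_{n=1}^\infty a_n e_n$ is absolutely convergent, hence convergent since $X$ is complete. This case needs nothing beyond the triangle inequality and completeness.

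For part (1): suppose $\mathcal B$ is an unconditional basis with unconditional constant $M$. The series $\sum_{n=1}^\infty e_n^*(x) e_n$ converges (it equals $x$), so I would like to apply Lemma~\ref{le:propLT} with $a_n$ replaced by $e_n^*(x)$ and the bounded multiplier sequence $\lambda_n := a_n / e_n^*(x)$ (set $\lambda_n = 0$ when $e_n^*(x) = 0$, which forces $a_n = 0$ too). Since $|\lambda_n| \leq 1$, Lemma~\ref{le:propLT} gives not only that $\sum_{n=1}^\infty \lambda_n e_n^*(x) e_n = \sum_{n=1}^\infty a_n e_n$ converges but also the bound $\|\sum a_n e_n\| \leq M \|x\|$. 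Strictly, to conclude convergence (not merely a norm bound on partial sums) I would phrase it via part (iii) of Lemma~\ref{le:sin}: unconditional convergence of $\sum e_n^*(x) e_n$ means $\sum a_n e_n = \sum (\lambda_n e_n^*(x)) e_n$ converges for every choice of $|\lambda_n| \leq 1$, which is exactly the claim.

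The main obstacle—really the only subtlety—is the degenerate coordinates where $e_n^*(x) = 0$: there the ratio $\lambda_n = a_n/e_n^*(x)$ is ill-defined, but the hypothesis $|a_n| \leq |e_n^*(x)| = 0$ forces $a_n = 0$, so setting $\lambda_n = 0$ there is harmless and keeps $\sup_n |\lambda_n| \leq 1$. Everything else is a direct appeal to Lemma~\ref{le:sin} and Lemma~\ref{le:propLT} in case (1) and to completeness in case (2). It is also worth remarking in passing that (2) does not follow from (1)—a basis need not be unconditional—which is presumably why both sufficient conditions are listed.
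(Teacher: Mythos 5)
Your proof is correct and follows essentially the same route as the paper: part (2) is the same absolute-convergence estimate (the paper phrases it via the Cauchy criterion rather than summing the whole series), and part (1) — which the paper dismisses as "directly from the definitions" — is exactly the bounded-multiplier argument via Lemma~\ref{le:sin}(iii) with $\lambda_n = a_n/e_n^*(x)$ that you spell out, including the correct handling of the coordinates where $e_n^*(x)=0$. (Only a trivial quibble with your closing aside: the relevant point is that neither condition implies the other, as the paper notes right after the proposition.)
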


\begin{proof}
(1) follows directly from the definitions and (2) follows from the inequality
$$
\Bigl\| \sum_{k=n+1}^{n+l} a_k e_k \Bigr\| \leq \sum_{k=n+1}^{n+l} |a_k| \|e_k\| \leq \sum_{k=n+1}^{n+l} |e_k^*(x)| \|e_k\| \leq \sum_{k=n+1}^{n+l} \varepsilon_k \|e_k\|.
$$
\end{proof}

Since conditions (1) and (2) of Proposition~\ref{pr:teles} do not imply each other, neither of them is necessary for the brick to be solid.

Recall that an element $x_0 \in A$ of a subset $A$ of a linear space $X$ is called an \emph{extreme point} of $A$ if there is no segment of $A$ centered at $x_0$, e.i. for every $x \in X$ there exists $\lambda \in [-1,1]$ such that $x_0 + \lambda x \notin A$. The next statement easily follows from the definitions.

\begin{prop}
Let $X$ be a Banach space with a normalized basis $\mathcal B = (e_n)_{n=1}^\infty$ and biorthogonal functionals $e_k^* \in X_0^*$, and let $\mathcal E = (\varepsilon_n)_{n=1}^\infty$. A vector $x_0 \in K_{\mathcal B, \mathcal E}$ is an extreme point of $K_{\mathcal B, \mathcal E}$ if and only if $|e_n^*(x_0)| = \varepsilon_n$ for all $n \in \mathbb N$.
\end{prop}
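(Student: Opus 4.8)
The plan is to prove both implications directly from the definitions, handling the ``only if'' part via its contrapositive. Throughout, write $a_n = e_n^*(x_0)$, so that $x_0 = \sum_{n=1}^\infty a_n e_n$ with $|a_n| \le \varepsilon_n$ for all $n$; recall also that, since $(e_n)_{n=1}^\infty$ is a basis, a vector $x \in X$ vanishes if and only if $e_n^*(x) = 0$ for every $n$. I read ``segment of $K_{\mathcal B,\mathcal E}$ centered at $x_0$'' as a nondegenerate one, i.e.\ $x_0$ fails to be an extreme point precisely when there is some $x \in X$ with $x \neq 0$ and $x_0 + \lambda x \in K_{\mathcal B,\mathcal E}$ for all $\lambda \in [-1,1]$.

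For the implication ``$|a_n| = \varepsilon_n$ for all $n$ $\Rightarrow$ $x_0$ is extreme'', I fix an arbitrary $x \in X$ with $x \neq 0$ and exhibit $\lambda \in [-1,1]$ with $x_0 + \lambda x \notin K_{\mathcal B,\mathcal E}$. Pick $n_0$ with $b := e_{n_0}^*(x) \neq 0$. If $a_{n_0} \neq 0$, set $\lambda = \operatorname{sign}(a_{n_0} b) \in \{-1,1\}$; then $e_{n_0}^*(x_0 + \lambda x) = a_{n_0} + \lambda b = \operatorname{sign}(a_{n_0})\,(|a_{n_0}| + |b|)$, so $|e_{n_0}^*(x_0 + \lambda x)| = |a_{n_0}| + |b| > \varepsilon_{n_0}$ and $x_0 + \lambda x \notin K_{\mathcal B,\mathcal E}$. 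If $a_{n_0} = 0$ (hence $\varepsilon_{n_0} = 0$), take $\lambda = 1$: then $|e_{n_0}^*(x_0 + x)| = |b| > 0 = \varepsilon_{n_0}$, again $x_0 + \lambda x \notin K_{\mathcal B,\mathcal E}$. Thus no direction $x$ yields a nondegenerate segment of $K_{\mathcal B,\mathcal E}$ through $x_0$, so $x_0$ is an extreme point.

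For the converse I argue by contraposition: assume $|a_{n_0}| < \varepsilon_{n_0}$ for some $n_0$, put $\delta = \varepsilon_{n_0} - |a_{n_0}| > 0$ and $x = \delta e_{n_0} \neq 0$. For any $\lambda \in [-1,1]$ and any $n \neq n_0$ one has $e_n^*(x_0 + \lambda x) = a_n$ because $e_n^*(e_{n_0}) = \delta_{n,n_0} = 0$, hence $|e_n^*(x_0 + \lambda x)| = |a_n| \le \varepsilon_n$; and for $n = n_0$, $|e_{n_0}^*(x_0 + \lambda x)| = |a_{n_0} + \lambda \delta| \le |a_{n_0}| + \delta = \varepsilon_{n_0}$. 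So $x_0 + \lambda x \in K_{\mathcal B,\mathcal E}$ for every $\lambda \in [-1,1]$, i.e.\ $K_{\mathcal B,\mathcal E}$ contains the nondegenerate segment through $x_0$ in the direction $e_{n_0}$, and $x_0$ is not extreme. There is no genuine obstacle here beyond bookkeeping; the only point needing a little care is the choice of direction in the ``only if'' part — one rescales $e_{n_0}$ by the slack $\delta$ (rather than using $e_{n_0}$ itself) so that the perturbed $n_0$-th coordinate stays in $[-\varepsilon_{n_0},\varepsilon_{n_0}]$ — together with the preliminary remark that a nonzero vector has a nonzero biorthogonal coordinate, which drives the ``if'' part.
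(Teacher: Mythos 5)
Your proof is correct and is exactly the direct verification from the definitions that the paper has in mind (the paper omits the proof, remarking only that the statement ``easily follows from the definitions''). Both implications are handled cleanly, and your two points of care --- reading ``segment'' as nondegenerate so that $x=0$ is excluded, and rescaling the direction $e_{n_0}$ by the slack $\delta=\varepsilon_{n_0}-|a_{n_0}|$ in the contrapositive --- are precisely the right ones.
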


It is immediate that if $K_{\mathcal B, \mathcal E}$ has an extreme point then $\lim_{n \to \infty} \varepsilon_n = 0$. As we will see below, the existence of an extreme point of a brick unrelated to its boundedness.

Observe that every element $x_0 \in X$ generates a brick corresponding to a normalized basis $\mathcal B = (e_n)_{n=1}^\infty$ of $X$ and the sequence $\varepsilon_n = |e_n^*(x_0)|$, an extreme point of which $x_0$ is.

First we show that the existence of an extreme point of a brick does not imply its boundedness.

\begin{example} \label{ex:unboundedwithre}
There exists an unbounded brick with en extreme point.
\end{example}

\begin{proof}
Let $X = c$ be the space of all converging sequences with the supremum norm. Consider the summing basis \cite[p.~20]{LTI} $e_n = (\underbrace{0, \ldots, 0}_{n-1}, 1, 1, \ldots)$, and the brick generated by the element
$$
x_0 = e_1 - \frac{e_2}{2} + \frac{e_3}{3} - \ldots + \frac{(-1)^{n+1}e_n}{n} + \ldots.
$$
The convergence of the series in $c$ follows from that of the Leibniz series $\sum_{n=1}^\infty \frac{(-1)^{n+1}}{n}$, and the unboundedness of the brick $K_{\mathcal B, \mathcal E}$ with the half-height $\varepsilon_n = \frac{1}{n}$ is guaranteed by the equality
$$
\Bigl\| e_1 + \frac{e_2}{2} + \frac{e_3}{3} + \ldots + \frac{e_n}{n} \Bigr\| = \sum_{k=1}^n \frac1k
$$
and the divergence of the harmonic series.
\end{proof}

To the contrast, every unconditional brick with an extreme point is bounded. Moreover, the norm of any extreme point (and hence, of an arbitrary element) is estimated by the unconditional constant of the basis and the norm of any fixed extreme point.

\begin{prop} \label{pr:mmxnc}
Let $X$ be a Banach space with a normalized unconditional basis $\mathcal B = (e_n)_{n=1}^\infty$ and the biorthogonal functionals $e_k^* \in X_0^*$ and the unconditional constant $M$, $\mathcal E = (\varepsilon_n)_{n=1}^\infty$. Let $x_0$ be an extreme point of the brick $K_{\mathcal B, \mathcal E}$. Then $K_{\mathcal B, \mathcal E}$ is bounded by $M \, \|x_0\|.$ In particular, if $\mathcal B$ is $1$-unconditional then $\|x\| \leq \|x_0\|$ for every $x \in K_{\mathcal B, \mathcal E}$.
\end{prop}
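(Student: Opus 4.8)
The plan is to reduce the problem to a statement about a single element $x \in K_{\mathcal B, \mathcal E}$ and compare it coordinatewise with the extreme point $x_0$. Let $x \in K_{\mathcal B, \mathcal E}$ be arbitrary. By the Proposition characterizing extreme points, $|e_n^*(x_0)| = \varepsilon_n$ for all $n$, while $|e_n^*(x)| \leq \varepsilon_n$ for all $n$. Hence the ratios $\lambda_n := e_n^*(x)/e_n^*(x_0)$ are well defined whenever $\varepsilon_n > 0$ (and may be set to $0$ otherwise, which does not affect the relevant partial sums), and satisfy $|\lambda_n| \leq 1$ for all $n$.

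Next I would apply Lemma~\ref{le:propLT} to the convergent series $\sum_{n=1}^\infty e_n^*(x_0)\, e_n = x_0$ with the bounded multiplier sequence $(\lambda_n)_{n=1}^\infty$. Since $\lambda_n e_n^*(x_0) = e_n^*(x)$ for each $n$, the series $\sum_{n=1}^\infty \lambda_n e_n^*(x_0)\, e_n$ is exactly $\sum_{n=1}^\infty e_n^*(x)\, e_n = x$. The lemma then gives
$$
\|x\| = \Bigl\| \sum_{n=1}^\infty \lambda_n e_n^*(x_0)\, e_n \Bigr\| \leq M \sup_n |\lambda_n| \, \Bigl\| \sum_{n=1}^\infty e_n^*(x_0)\, e_n \Bigr\| \leq M \|x_0\|,
$$
which is the desired bound. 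For the $1$-unconditional case one has $M = 1$, giving $\|x\| \leq \|x_0\|$ directly.

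The one point requiring a little care — and the only real obstacle — is the handling of coordinates $n$ with $\varepsilon_n = 0$: there $e_n^*(x_0) = e_n^*(x) = 0$, so setting $\lambda_n = 0$ is harmless and the identity $\lambda_n e_n^*(x_0) = e_n^*(x)$ still holds termwise. A secondary subtlety is that Lemma~\ref{le:propLT} is stated for unconditional \emph{basic sequences} and a series that converges; here the series $\sum e_n^*(x_0)\, e_n$ converges to $x_0 \in X$ by definition of a basis, and $(e_n)$ is in particular an unconditional basic sequence with unconditional constant $M$, so the hypotheses are met. Everything else is a direct substitution, so the proof is short.
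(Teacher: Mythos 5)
Your proof is correct and is exactly the argument the paper intends: the paper simply states that Proposition~\ref{pr:mmxnc} follows from Lemma~\ref{le:propLT}, and your choice of multipliers $\lambda_n = e_n^*(x)/e_n^*(x_0)$ (with the harmless convention at coordinates where $\varepsilon_n = 0$) is the natural way to fill in that reduction.
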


Proposition~\ref{pr:mmxnc} follows from Lemma~\ref{le:propLT}.

Now we show that the boundedness of a brick does not imply the existence of an extreme point, even of a $1$-unconditional brick.

\begin{example} \label{ex:boundedwithoutre}
There exists a bounded $1$-unconditional brick without an extreme point.
\end{example}

\begin{proof}
Such a brick, for example, is the closed unit ball of the space $c_0$. Indeed, consider the $1$-unconditional standard basis of $c_0$ and take $\varepsilon_n = 1$, $n = 1,2, \ldots$. Then the absence of extreme points is obvious.
\end{proof}

\begin{prop}
Every bounded boundedly complete brick contains an extreme point.
\end{prop}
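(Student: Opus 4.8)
The plan is to exhibit explicitly a point $x_0 \in K_{\mathcal B, \mathcal E}$ with $|e_n^*(x_0)| = \varepsilon_n$ for every $n$; by the characterization of extreme points proved just above, any such point is automatically an extreme point of the brick. The natural candidate is $x_0 = \sum_{n=1}^\infty \varepsilon_n e_n$, so the whole matter reduces to showing that this series converges, and for this I would lean entirely on the bounded completeness of $\mathcal B$.

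Recall that bounded completeness requires only that the partial sums $S_N = \sum_{n=1}^N \varepsilon_n e_n$ be uniformly bounded in norm. This is immediate from the hypothesis that the brick is bounded: since $e_n^*(S_N) = \varepsilon_n$ for $n \leq N$ and $e_n^*(S_N) = 0$ for $n > N$, we have $|e_n^*(S_N)| \leq \varepsilon_n$ for all $n$, hence $S_N \in K_{\mathcal B, \mathcal E}$, and therefore $\sup_N \|S_N\| \leq \sup_{x \in K_{\mathcal B, \mathcal E}} \|x\| < \infty$.

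By bounded completeness of $\mathcal B$, the series $\sum_{n=1}^\infty \varepsilon_n e_n$ then converges to some $x_0 \in X$, and by continuity of the biorthogonal functionals $e_n^*(x_0) = \varepsilon_n$ for every $n$. In particular $x_0 \in K_{\mathcal B, \mathcal E}$, and the equalities $|e_n^*(x_0)| = \varepsilon_n$, $n \in \mathbb N$, identify $x_0$ as an extreme point by the preceding proposition. (Any fixed choice of signs would serve equally well: the partial sums of $\sum_n \theta_n \varepsilon_n e_n$, $\theta_n = \pm 1$, also lie in $K_{\mathcal B, \mathcal E}$.)

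There is no genuinely hard step here. The substance has already been packaged into the two facts available from the excerpt — the extreme-point characterization of bricks and the definition of bounded completeness — and the only thing one actually checks is the trivial membership $S_N \in K_{\mathcal B, \mathcal E}$. The single point worth noting is that the argument uses boundedness of the brick only through the crude bound $\|S_N\| \leq \sup_{x \in K_{\mathcal B, \mathcal E}}\|x\|$, so no quantitative control beyond finiteness of the diameter is needed.
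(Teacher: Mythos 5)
Your argument is correct and is essentially the paper's own proof: both show that the partial sums $S_N = \sum_{n=1}^N \varepsilon_n e_n$ lie in the brick, hence are uniformly bounded, invoke bounded completeness to get convergence of $\sum_{n=1}^\infty \varepsilon_n e_n$, and identify the sum as an extreme point via $|e_n^*(x_0)| = \varepsilon_n$. No differences worth noting.
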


\begin{proof}
Let $X$ be a Banach space with a normalized boundedly complete basis $\mathcal B = (e_n)_{n=1}^\infty$ and biorthogonal functionals $e_k^* \in X_0^*$, and let $\mathcal E = (\varepsilon_n)_{n=1}^\infty$. Assume $\|x\| \leq L$ for all $x \in K_{\mathcal B, \mathcal E}$ and some number $L$. Observe that for $a_n = \varepsilon_n$ the condition $\bigl\| \sum_{k=1}^n a_k e_k \bigr\| \leq L$ holds for every $n \in \mathbb N$, because $\sum_{k=1}^n a_k e_k \in K_{\mathcal B, \mathcal E}$. Since the basis is boundedly complete, the series $x_0 = \sum_{n=1}^\infty \varepsilon_n e_n$ converges, and hence there is an extreme point $x_0$.
\end{proof}

\section{Radii and a characterization of the compactness for bricks}

We consider the following three radii of a brick: the extreme radius, the unconditional radius and the absolute radius. In the case where the unconditional radius of a brick is finite all the three radii coincide. Moreover, in this case (and only in this case) the brick is compact. If an extreme radius is finite then it equals the absolute radius.

Let $X$ be a Banach space with a normalized basis $\mathcal B = (e_n)_{n=1}^\infty$ and biorthogonal functionals $e_k^* \in X_0^*$, and let $\mathcal E = (\varepsilon_n)_{n=1}^\infty$.

\begin{definition} \label{def:radii}
The \emph{extreme radius} $r^{\rm ext}(K_{\mathcal B, \mathcal E})$, the \emph{unconditional radius} $r^{\rm unc}(K_{\mathcal B, \mathcal E})$ and the \emph{absolute radius} $\sup\limits_{x \in K_{\mathcal B, \mathcal E}} \|x\|$ of the brick $K_{\mathcal B, \mathcal E}$ is defined to be either a number or a symbol $\infty$ as follows.
\begin{enumerate}
  \item $r^{\rm ext}(K_{\mathcal B, \mathcal E}) = \sup \bigl\{\|x_0\|: \, x_0 \,\, \mbox{\footnotesize is an extreme point of} \,\, K_{\mathcal B, \mathcal E} \bigr\}$, if an extreme point exists, and $r^{\rm ext}(K_{\mathcal B, \mathcal E}) = \infty$ otherwise.
  \item $\displaystyle{r^{\rm unc}(K_{\mathcal B, \mathcal E}) = \sup_{\theta_n = \pm 1} \Bigl\| \sum_{n=1}^\infty \theta_n \varepsilon_n e_n \Bigr\|}$ (the norm of a divergent series is $\infty$).
  \item $\sup\limits_{x \in K_{\mathcal B, \mathcal E}} \|x\|$.
\end{enumerate}
\end{definition}

We do not offer a special symbol for the absolute radius, because the formula in (3) is not involved and clear. The difference between the defined radii could be demonstrated using Example~\ref{ex:boundedwithoutre} where as a brick we take the closed unit ball $B_{c_0}$ of the space $c_0$. By the definitions, $r^{\rm unc}(B_{c_0}) = 1$, however $r^{\rm ext}(B_{c_0}) = r^{\rm unc}(B_{c_0}) = \infty$. Below we construct a brick $K_{\mathcal B, \mathcal E}$ (Example~\ref{ex:uncompact}), for which $r^{\rm ext}(K_{\mathcal B, \mathcal E}) = \sup\limits_{x \in K_{\mathcal B, \mathcal E}} \|x\| < \infty$, however $r^{\rm unc}(B_{c_0}) = \infty$. On the other hand, Example~\ref{ex:unboundedwithre} may mislead the reader by hinting that a brick with a finite extreme radius need not be bounded. Actually, we have the following statement on the connection between the radii.

\subsection{The connection between radii}

\begin{theorem} \label{pr:kirprad}
For an arbitrary brick $K_{\mathcal B, \mathcal E}$ in a Banach space $X$ the following assertions hold.
\begin{enumerate}
  \item $r^{\rm ext}(K_{\mathcal B, \mathcal E}) \leq r^{\rm unc}(K_{\mathcal B, \mathcal E})$.
  \item If $r^{\rm ext}(K_{\mathcal B, \mathcal E}) < \infty$ then $r^{\rm ext}(K_{\mathcal B, \mathcal E}) = \sup\limits_{x \in K_{\mathcal B, \mathcal E}} \|x\|$.
  \item If $r^{\rm unc}(K_{\mathcal B, \mathcal E}) < \infty$ then $r^{\rm ext}(K_{\mathcal B, \mathcal E}) = r^{\rm unc}(K_{\mathcal B, \mathcal E}) = \sup\limits_{x \in K_{\mathcal B, \mathcal E}} \|x\|$.
\end{enumerate}
\end{theorem}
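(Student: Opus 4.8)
The plan is to prove the three assertions in the order $(1),(2),(3)$, exploiting the observation that finiteness of either $r^{\rm unc}$ or $r^{\rm ext}$ already guarantees that $K_{\mathcal B,\mathcal E}$ has an extreme point (namely the sum of $\sum_n \varepsilon_n e_n$, once this series is known to converge). Throughout I will use the earlier characterization of extreme points of a brick: $x_0\in K_{\mathcal B,\mathcal E}$ is extreme iff $|e_n^*(x_0)|=\varepsilon_n$ for every $n$.

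For $(1)$: if $K_{\mathcal B,\mathcal E}$ has no extreme point, then $r^{\rm ext}=\infty$ by definition, and also $r^{\rm unc}=\infty$, since $r^{\rm unc}<\infty$ would force the series $\sum_n\varepsilon_n e_n$ (all signs $+1$) to converge to some $x_0$ with $|e_n^*(x_0)|=\varepsilon_n$ for all $n$, an extreme point. If an extreme point $x_0=\sum_n b_n e_n$ exists, then $|b_n|=\varepsilon_n$, so $x_0=\sum_n \theta_n\varepsilon_n e_n$ with $\theta_n=\mathrm{sign}(b_n)$ (put $\theta_n=1$ when $\varepsilon_n=0$), whence $\|x_0\|\le r^{\rm unc}$; taking the supremum over all extreme points gives $r^{\rm ext}\le r^{\rm unc}$.

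For $(2)$: assume $r^{\rm ext}<\infty$, so a fixed extreme point $x_0=\sum_n b_n e_n$ with $|b_n|=\varepsilon_n$ exists. The inequality $r^{\rm ext}\le\sup_{x\in K_{\mathcal B,\mathcal E}}\|x\|$ is immediate, so the task is the reverse inequality. Given $x=\sum_n a_n e_n\in K_{\mathcal B,\mathcal E}$, I would approximate $x$ by $z_N:=P_N x+(x_0-P_N x_0)=\sum_{n\le N}a_n e_n+\sum_{n>N}b_n e_n$; these lie in $K_{\mathcal B,\mathcal E}$ and converge to $x$, because $P_Nx\to x$ and $\sum_{n>N}b_n e_n$ is a tail of the convergent series $x_0$. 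The key point is that $z_N$ is a convex combination of extreme points of the whole brick: writing $(a_1,\dots,a_N)$ as a convex combination $\sum_j\mu_j(\theta_1^{(j)}\varepsilon_1,\dots,\theta_N^{(j)}\varepsilon_N)$ of the vertices of the box $\prod_{n\le N}[-\varepsilon_n,\varepsilon_n]$ (elementary finite-dimensional convexity), one gets $z_N=\sum_j\mu_j w_j$ with $w_j=\sum_{n\le N}\theta_n^{(j)}\varepsilon_n e_n+\sum_{n>N}b_n e_n$, each $w_j$ satisfying $|e_m^*(w_j)|=\varepsilon_m$ for all $m$, hence extreme. Therefore $\|z_N\|\le\sum_j\mu_j\|w_j\|\le r^{\rm ext}$, and letting $N\to\infty$ gives $\|x\|\le r^{\rm ext}$, as required. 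For $(3)$: if $r^{\rm unc}<\infty$, then $(1)$ gives $r^{\rm ext}\le r^{\rm unc}<\infty$, so $(2)$ gives $r^{\rm ext}=\sup_x\|x\|$; and for each sign sequence $\theta$ the series $y_\theta=\sum_n\theta_n\varepsilon_n e_n$ converges and is an extreme point, so $\|y_\theta\|\le r^{\rm ext}$, whence $r^{\rm unc}\le r^{\rm ext}$, and all three quantities coincide.

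The only non-routine step is the middle of $(2)$: realizing a finite perturbation of $x$ as a genuine convex combination of extreme points of an infinite-dimensional brick. The device that makes it work is to disturb $x$ in only finitely many coordinates while grafting the tail of the fixed extreme point $x_0$ onto it, so that the tail coordinates automatically lie on the boundary; a finite-dimensional Carathéodory/Krein–Milman argument on the box then disposes of the first $N$ coordinates. Everything else is bookkeeping with the biorthogonal functionals and with convergence of tails of series.
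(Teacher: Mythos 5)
Your proof is correct, and its core device is the same one the paper uses: fix an extreme point $x_0=\sum_n b_n e_n$, truncate at $N$, and graft the tail $\sum_{n>N}b_n e_n$ onto a finitely modified initial segment so that the resulting vector has $|e_m^*(\cdot)|=\varepsilon_m$ in every coordinate and is therefore itself an extreme point. Where you diverge is in how the first $N$ coordinates are handled. The paper isolates the hard inequality of item (2) as Lemma~\ref{th:bound} and runs it in two stages: first it bounds $\max_{\theta_n=\pm1}\bigl\|\sum_{n=1}^N\theta_n\varepsilon_n e_n\bigr\|$ by $r^{\rm ext}+\varepsilon$ via the grafting trick, and then it controls $\|P_N\hat x\|$ by that maximum by invoking Lemma~\ref{le:propMMP}. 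You instead write the coefficient vector $(a_1,\dots,a_N)$ as a convex combination of the vertices of the box $\prod_{n\le N}[-\varepsilon_n,\varepsilon_n]$ and observe that your approximant $z_N=P_Nx+(x_0-P_Nx_0)$ is then literally a convex combination of extreme points of the brick, so $\|z_N\|\le r^{\rm ext}$ with no $\varepsilon$-bookkeeping and no external lemma. Since the standard proof of Lemma~\ref{le:propMMP} is exactly this convex-decomposition argument, the two routes are mathematically equivalent; what yours buys is self-containment and a cleaner conceptual statement (the closure of the convex hull of the extreme points swallows the whole brick when $r^{\rm ext}<\infty$), while the paper's buys reusability of Lemma~\ref{le:propMMP} elsewhere. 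Your treatments of (1) and (3) — in particular the observation that finiteness of $r^{\rm unc}$ forces the all-plus-signs series to converge to an extreme point, and that each $\sum_n\theta_n\varepsilon_n e_n$ is then an extreme point, giving $r^{\rm unc}\le r^{\rm ext}$ — fill in correctly the details the paper dismisses as following from the definitions.
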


\begin{proof}
Item (1) follows immediately from the definitions.

(2) Assume $r^{\rm ext}(K_{\mathcal B, \mathcal E}) < \infty$. By the definitions, $r^{\rm ext}(K_{\mathcal B, \mathcal E}) \leq \sup\limits_{x \in K_{\mathcal B, \mathcal E}} \|x\|$. The inequality $\sup\limits_{x \in K_{\mathcal B, \mathcal E}} \|x\| \leq r^{\rm ext}(K_{\mathcal B, \mathcal E})$ is quite thin; its proof we present separately (see Lemma~\ref{th:bound} below).

(3) Assume $r^{\rm unc}(K_{\mathcal B, \mathcal E}) < \infty$. The equality $r^{\rm ext}(K_{\mathcal B, \mathcal E}) = r^{\rm unc}(K_{\mathcal B, \mathcal E})$ follows from the definitions as well, and the equality $\sup\limits_{x \in K_{\mathcal B, \mathcal E}} \|x\| = r^{\rm ext}(K_{\mathcal B, \mathcal E})$ follows from item (2).
\end{proof}

\begin{lemma} \label{th:bound}
If $r^{\rm ext}(K_{\mathcal B, \mathcal E}) < \infty$ then the brick $K_{\mathcal B, \mathcal E}$ is bounded by $r^{\rm ext}(K_{\mathcal B, \mathcal E})$.
\end{lemma}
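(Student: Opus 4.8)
The plan is to show that every $x\in K_{\mathcal B,\mathcal E}$ satisfies $\|x\|\le r^{\mathrm{ext}}(K_{\mathcal B,\mathcal E})=:R$. Fix such an $x$ and write $x=\sum_{n=1}^\infty a_ne_n$ with $|a_n|\le\varepsilon_n$. Since $r^{\mathrm{ext}}(K_{\mathcal B,\mathcal E})<\infty$, the brick has at least one extreme point, so by the extreme-point characterization $\varepsilon_n>0$ for every $n$ (otherwise $a_n=0$ on that coordinate for all brick elements and $\lim\varepsilon_n=0$ still holds, but more importantly an extreme point would need $|e_n^*(x_0)|=\varepsilon_n$, which is fine even if $\varepsilon_n=0$; the real point is only that extreme points exist). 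Write $a_n=t_n\varepsilon_n$ with $|t_n|\le 1$. The idea is to approximate $x$ by finite convex combinations of extreme points: truncate at level $N$, perturb the first $N$ coordinates to have modulus exactly $\varepsilon_n$, and fill in the tail with a genuine extreme point.

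Concretely, I would first handle the finite-dimensional truncation $P_Nx=\sum_{n=1}^N t_n\varepsilon_n e_n$. Using Lemma~\ref{le:propMMP} applied to the vectors $x_n=\varepsilon_n e_n$ (so $\sum_{n=1}^N t_n\varepsilon_n e_n$ has norm at most $\max_n|t_n|\le 1$ times $\max_{\theta_n=\pm1}\|\sum_{n=1}^N\theta_n\varepsilon_n e_n\|$), we get $\|P_Nx\|\le\max_{\theta}\|\sum_{n=1}^N\theta_n\varepsilon_n e_n\|$. So the crux is to bound the finite signed sums $\sum_{n=1}^N\theta_n\varepsilon_n e_n$ by $R$. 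For this, fix a signing $\theta$ and an extreme point $x_0=\sum_{n=1}^\infty s_n\varepsilon_n e_n$ with $|s_n|=1$ for all $n$ (exists since $R<\infty$). Then the vector $y_N=\sum_{n=1}^N\theta_n\varepsilon_n e_n+\sum_{n>N}s_n\varepsilon_n e_n$ has $|e_n^*(y_N)|=\varepsilon_n$ for every $n$, provided the defining series converges — and it does, because it differs from $x_0$ in only finitely many coordinates, i.e. $y_N=x_0+\sum_{n=1}^N(\theta_n-s_n)\varepsilon_n e_n$, a convergent series. Hence $y_N$ is an extreme point of $K_{\mathcal B,\mathcal E}$, so $\|y_N\|\le R$.

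It remains to pass from $y_N$ back to the partial sum $\sum_{n=1}^N\theta_n\varepsilon_n e_n=P_Ny_N$. Since $\|P_Ny_N\|\le K\|y_N\|$ would only give the bound $KR$, I need the sharper route: observe instead that $P_N$ applied to \emph{any} element of $K_{\mathcal B,\mathcal E}$ lands back in $K_{\mathcal B,\mathcal E}$ (the coordinates of $P_Nz$ are those of $z$ for $n\le N$ and $0$ afterwards, still dominated by $\varepsilon_n$). Thus it suffices to bound $\|z\|$ for $z\in K_{\mathcal B,\mathcal E}$ of the form $z=\sum_{n=1}^Nb_ne_n$, i.e. finitely supported bricks elements, and for these $P_Nz=z$. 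So the whole problem reduces to: \emph{every finitely supported element of $K_{\mathcal B,\mathcal E}$ has norm at most $R$.} For such $z$ with support in $\{1,\dots,N\}$, apply Lemma~\ref{le:propMMP} with $x_n=\varepsilon_ne_n$ and scalars $b_n/\varepsilon_n$ (modulus $\le1$) to get $\|z\|\le\max_\theta\|\sum_{n=1}^N\theta_n\varepsilon_ne_n\|$, and each of those finitely many signed sums equals $P_Ny_N=y_N-\sum_{n>N}s_n\varepsilon_ne_n$; but $\sum_{n>N}s_n\varepsilon_ne_n=x_0-P_Nx_0\in K_{\mathcal B,\mathcal E}$ is again a finitely-"co"supported... here I would instead directly note $P_Ny_N\in K_{\mathcal B,\mathcal E}$, and since $P_Ny_N$ is finitely supported we seem to be going in a circle, so the clean fix is to establish the finitely-supported case first by a separate induction or by noting $P_Ny_N$ equals $\frac12(y_N+ \tilde y_N)$ where $\tilde y_N$ flips the signs on the tail — no: rather, $P_Ny_N = \frac{1}{2}(y_N + y_N')$ where $y_N'=\sum_{n=1}^N\theta_n\varepsilon_ne_n-\sum_{n>N}s_n\varepsilon_ne_n$, and $y_N'$ is also an extreme point (same argument), so by convexity $\|P_Ny_N\|\le\frac12(\|y_N\|+\|y_N'\|)\le R$. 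This closes the loop: $\|P_Nx\|\le\max_\theta\|P_Ny_N\|\le R$ for all $N$, and letting $N\to\infty$ gives $\|x\|=\lim_N\|P_Nx\|\le R$.

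The main obstacle is precisely the bookkeeping that makes the "flip the tail" trick legitimate: one must verify that flipping finitely many signs of an extreme point keeps the defining series convergent (hence produces another extreme point), and then average two extreme points whose half-sum is exactly the desired partial sum. The use of Lemma~\ref{le:propMMP} to reduce arbitrary brick elements to $\pm\varepsilon_n$ signed sums is the other ingredient, and care is needed that $P_N$ of a brick element is again a brick element so that the limit $\|x\|=\lim\|P_Nx\|$ can be taken without an extra basis-constant factor.
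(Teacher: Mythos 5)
Your argument is correct and follows the same skeleton as the paper's proof: reduce the problem, via Lemma~\ref{le:propMMP}, to bounding the signed sums $\bigl\| \sum_{n=1}^N \theta_n \varepsilon_n e_n \bigr\|$, and control those by observing that changing finitely many coordinates of an extreme point $x_0 = \sum_n s_n \varepsilon_n e_n$ still yields a convergent series, hence another extreme point. The one place where you genuinely diverge is the final bookkeeping. The paper bounds the signed partial sum by choosing $N$ large enough that the tail $\sum_{n>N} s_n \varepsilon_n e_n$ of the fixed extreme point has norm $<\varepsilon$, writes the partial sum as $y_N$ minus that small tail, and so obtains $\bigl\| \sum_{n=1}^N \theta_n \varepsilon_n e_n \bigr\| < r^{\rm ext} + \varepsilon$; combined with the tail estimate for $\hat x$ itself this gives $\|\hat x\| < r^{\rm ext} + 2\varepsilon$, and the conclusion follows by letting $\varepsilon \to 0$. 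Your averaging trick --- writing $\sum_{n=1}^N \theta_n \varepsilon_n e_n = \tfrac12(y_N + y_N')$ with $y_N' $ obtained from $y_N$ by flipping the tail signs, both extreme points --- gives the clean bound $\bigl\| \sum_{n=1}^N \theta_n \varepsilon_n e_n \bigr\| \le r^{\rm ext}$ for \emph{every} $N$, with no $\varepsilon$ slack and no need to pick $N$ adapted to $x_0$; then $\|P_N x\| \le r^{\rm ext}$ for all $N$ and $\|x\| = \lim_N \|P_N x\|$ finishes it. This is a mild but genuine simplification of the paper's estimate. The writeup itself meanders (the digression about $\varepsilon_n > 0$, and the abandoned attempt to reduce to finitely supported brick elements, can both be deleted); only the last third of your text is the actual proof.
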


\begin{proof}
First we prove that for every $n_0 \in \mathbb N$ and every $\varepsilon > 0$ there is $N \geq n_0$ such that for all signs $\theta_1, \ldots, \theta_N \in \{-1,1\}$ one has
\begin{equation} \label{ffk}
\Bigl\| \sum_{n=1}^N \theta_n \varepsilon_n e_n \Bigr\| < r^{\rm ext}(K_{\mathcal B, \mathcal E}) + \varepsilon.
\end{equation}

Indeed, fix any extreme point $x_0 = \sum_{n=1}^\infty \alpha_n e_n$ of $K_{\mathcal B, \mathcal E}$, $|\alpha_n| = \varepsilon_n$, $n = 1,2, \ldots$ (an extreme point exists because $r^{\rm ext}(K_{\mathcal B, \mathcal E}) < \infty$). Choose $N \geq n_0$ so that
\begin{equation} \label{ffk2}
\Bigl\| \sum_{n>N} \alpha_n e_n \Bigr\| < \varepsilon.
\end{equation}

Let $\theta_1, \ldots, \theta_N \in \{-1,1\}$ be any signs. Observe that
$$
x = \sum_{n=1}^N \theta_n \varepsilon_n e_n + \sum_{n>N} \alpha_n e_n
$$
is an extreme point of $K_{\mathcal B, \mathcal E}$, hence $\|x\| \leq r^{\rm ext}(K_{\mathcal B, \mathcal E})$. Taking into account \eqref{ffk2}, we obtain
$$
\Bigl\| \sum_{n=1}^N \theta_n \varepsilon_n e_n \Bigr\| \leq \|x\| + \Bigl\| \sum_{n>N} \alpha_n e_n \Bigr\| < r^{\rm ext}(K_{\mathcal B, \mathcal E}) + \varepsilon.
$$
Thus, \eqref{ffk} is proved.

Let $\hat{x} \in K_{\mathcal B, \mathcal E}$ be any element. Show that $\|\hat{x}\| \leq r^{\rm ext}(K_{\mathcal B, \mathcal E})$. Fix any $\varepsilon > 0$ and pick $n_0 \in \mathbb N$ so that for each $m \geq n_0$
\begin{equation} \label{ffk3}
\Bigl\| \sum_{n>m} e_n^*(\hat{x}) \, e_n \Bigr\| < \varepsilon.
\end{equation}

Then by the above, choose $N \geq n_0$ so that for all signs $\theta_1, \ldots, \theta_N \in \{-1,1\}$ one has \eqref{ffk}. Then
\begin{align*}
\|\hat{x}\| = \Bigl\| \sum_{n=1}^\infty e_n^*(\hat{x}) \, e_n \Bigr\| &\stackrel{\mbox{\tiny \eqref{ffk3}}}{\leq} \Bigl\| \sum_{n=1}^N e_n^*(\hat{x}) \, e_n \Bigr\| + \varepsilon\\
&\stackrel{\mbox{\tiny Lemma~\ref{le:propMMP}}}{\leq} \max_{\theta_n = \pm 1} \Bigl\| \sum_{n=1}^N \theta_n \varepsilon_n e_n \Bigr\| + \varepsilon\\
&\stackrel{\mbox{\tiny \eqref{ffk}}}{<} r^{\rm ext}(K_{\mathcal B, \mathcal E}) + 2 \varepsilon.
\end{align*}

By the arbitrariness of $\varepsilon > 0$, $\|\hat{x}\| \leq r^{\rm ext}(K_{\mathcal B, \mathcal E})$. Thus, $K_{\mathcal B, \mathcal E}$ is bounded by $r^{\rm ext}(K_{\mathcal B, \mathcal E})$.
\end{proof}

\subsection{Bricks with finite extreme radius}

In this subsection we study the question of the compactness of a brick with finite extreme radius.

\begin{example} \label{ex:uncompact}
There exists a noncompact brick of finite extreme radius.
\end{example}

\begin{proof}
This example is a modification of Example~\ref{ex:boundedwithoutre}. We choose integers $0 = n_0$,  $2 = n_1 < n_2 < \ldots$ such that
\begin{equation} \label{eq:kkkkdf}
\frac{1}{n_{k-1} + 1} + \ldots + \frac{1}{n_k} \in [1,2], \,\,\, k = 1,2, \ldots.
\end{equation}

Then for $X = c_0$ we define a basis $\mathcal B = (f_n)_{n=1}^\infty$ by
\begin{align*}
&f_1 = (1,1,0,0, \ldots),\\
&f_2 = (0,1,0,0, \ldots),\\
&f_3 = (0,0,\underbrace{1,1, \ldots, 1, 1}\limits_{n_2-n_1}, 0,0, \ldots),\\
&f_4 = (0,0,\underbrace{0,1, \ldots, 1, 1}\limits_{n_2-n_1}, 0,0, \ldots),\\
&\ldots\\
&f_{n_2} = (0,0,\underbrace{0,0, \ldots, 0, 1}\limits_{n_2-n_1}, 0,0, \ldots),\\
&f_{n_2+1} = (\underbrace{0,0, \ldots, 0}\limits_{n_2}, \underbrace{1,1, \ldots, 1, 1}\limits_{n_3-n_2}, 0,0, \ldots),\\
&\ldots .\\
\end{align*}

Using standard arguments, one can prove that the above system $\mathcal B = (f_n)_{n=1}^\infty$ is a basis of $c_0$. More precisely, first we need to prove the inequality $\bigl\| \sum_{k=1}^n a_k f_k \bigr\| \leq \bigl\| \sum_{k=1}^m a_k f_k \bigr\|$ for all $n < m$ and any collection of scalars $(a_k)_{k=1}^m$. Then we prove that the linear span of $(f_n)_{n=1}^\infty$ is dense in $c_0$ (because the standard basis of $c_0$ is contained in that linear span). We omit the details which are straightforward.

Then we define half-height by $\varepsilon_n = \frac1n$, $n = 1,2, \ldots$, set $\mathcal E = (\varepsilon_n)_{n=1}^\infty$ and prove that the brick $K_{\mathcal B, \mathcal E}$ is as desired. First we show that $K_{\mathcal B, \mathcal E}$ contains an extreme point. Indeed, the series $f_0 = \sum_{n=1}^\infty (-1)^{n+1} \varepsilon_n f_n$ converges in $c_0$, because the Leibniz series $\sum_{n=1}^\infty \frac{(-1)^{n+1}}{n}$ converges, and hence, possesses the Cauchy condition. By \eqref{eq:kkkkdf} we obtain that the brick is norm bounded by $2$, hence, $r^{\rm ext}(K_{\mathcal B, \mathcal E}) \leq  2$. The noncompactness of $K_{\mathcal B, \mathcal E}$ follows from the fact that the sequence
$$
g_k = \frac{1}{n_{k-1} + 1} f_{n_{k-1} + 1} + \ldots + \frac{1}{n_k} f_{n_k}, \,\,\, k = 1,2, \ldots.
$$
satisfies $g_k \in K_{\mathcal B, \mathcal E}$ and $\|g_k\| \geq 1$ by \eqref{eq:kkkkdf}.
\end{proof}

Now we show that in the most natural cases (unconditional or boundedly complete basis) a brick of finite extreme radius is compact.

\begin{theorem} \label{th:comp}
Every unconditional or boundedly complete brick of finite extreme radius is compact.
\end{theorem}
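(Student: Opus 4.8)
The plan is to deduce precompactness of the brick from the Fourier-tail criterion (Lemma~\ref{le:crcomp}) and then upgrade it to compactness using that bricks are closed (Proposition~\ref{pr:closed}). Write $\mathcal B=(e_n)_{n=1}^\infty$ for the (normalized) basis, $K$ for its basis constant, $(P_m)$ for the basis projections, and set
$$r_N:=\sup_{\theta_n=\pm1}\Bigl\|\sum_{n>N}\theta_n\varepsilon_n e_n\Bigr\|.$$
Since $r^{\rm ext}(K_{\mathcal B,\mathcal E})<\infty$, Lemma~\ref{th:bound} provides a bound $L:=r^{\rm ext}(K_{\mathcal B,\mathcal E})$ for $\|x\|$, $x\in K_{\mathcal B,\mathcal E}$ (and $L=\sup_{x\in K_{\mathcal B,\mathcal E}}\|x\|$ by Theorem~\ref{pr:kirprad}). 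The whole proof reduces to showing $r_N\to0$. Indeed, granting this, fix $x\in K_{\mathcal B,\mathcal E}$ and $m>N$; discarding the (harmless) indices $n$ with $\varepsilon_n=0$, for which $e_n^*(x)=0$ automatically, and applying Lemma~\ref{le:propMMP} to the finite system $(\varepsilon_n e_n)_{N<n\le m}$ with multipliers $e_n^*(x)/\varepsilon_n$ of modulus $\le1$ gives
$$\Bigl\|\sum_{n=N+1}^{m}e_n^*(x)\,e_n\Bigr\|\ \le\ \max_{\theta_n=\pm1}\Bigl\|\sum_{n=N+1}^{m}\theta_n\varepsilon_n e_n\Bigr\|\ =\ \max_{\theta_n=\pm1}\Bigl\|P_m\Bigl(\sum_{n>N}\theta_n\varepsilon_n e_n\Bigr)\Bigr\|\ \le\ K\,r_N,$$
and letting $m\to\infty$ yields $\sup_{x\in K_{\mathcal B,\mathcal E}}\bigl\|\sum_{n>N}e_n^*(x)e_n\bigr\|\le Kr_N\to0$; so Lemma~\ref{le:crcomp} gives precompactness, and since $K_{\mathcal B,\mathcal E}$ is closed it is compact.

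To see $r_N\to0$ when $\mathcal B$ is unconditional with constant $M$, pick an extreme point $x_0=\sum_n\beta_ne_n$ (it exists because $r^{\rm ext}<\infty$), so that $|\beta_n|=\varepsilon_n$ for all $n$. Applying Lemma~\ref{le:propLT} to the convergent series $\sum_n\beta_ne_n$ with multipliers $\varepsilon_n/\beta_n$ (and $0$ where $\beta_n=0$, i.e.\ $\varepsilon_n=0$) shows that $\sum_n\varepsilon_ne_n$ converges; applying Lemma~\ref{le:propLT} once more, to the tail $\sum_{n>N}\varepsilon_ne_n$ with multipliers $\theta_n$, gives $\bigl\|\sum_{n>N}\theta_n\varepsilon_ne_n\bigr\|\le M\bigl\|\sum_{n>N}\varepsilon_ne_n\bigr\|$ for every sign sequence. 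Hence $r_N\le M\bigl\|\sum_{n>N}\varepsilon_ne_n\bigr\|\to0$, the convergence of $\sum_n\varepsilon_ne_n$ being exactly what makes its tails vanish.

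To see $r_N\to0$ when $\mathcal B$ is boundedly complete, note first that for every sign sequence $\theta$ the partial sums $\sum_{n=1}^N\theta_n\varepsilon_ne_n$ lie in $K_{\mathcal B,\mathcal E}$, hence are bounded by $L$, so $\sum_n\theta_n\varepsilon_ne_n$ converges; in particular each tail $\sum_{n>N}\theta_n\varepsilon_ne_n$ converges, and it equals $(I-P_N)$ applied to an element of $K_{\mathcal B,\mathcal E}$, whence $r_N\le(1+K)L<\infty$. Suppose $r_N\not\to0$ and fix $\delta>0$ with $r_N>2\delta$ for infinitely many $N$. Recursively choose integers $0\le N_1<m_1\le N_2<m_2\le\cdots$ and sign sequences $\theta^{(j)}$: take $N_j\ge m_{j-1}$ with $r_{N_j}>2\delta$ and $\theta^{(j)}$ with $\bigl\|\sum_{n>N_j}\theta^{(j)}_n\varepsilon_ne_n\bigr\|>2\delta$, then, using convergence of that tail, pick $m_j>N_j$ with $\bigl\|\sum_{n>m_j}\theta^{(j)}_n\varepsilon_ne_n\bigr\|<\delta$, so that $\bigl\|\sum_{N_j<n\le m_j}\theta^{(j)}_n\varepsilon_ne_n\bigr\|>\delta$. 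Now splice the signs: put $\theta^*_n=\theta^{(j)}_n$ for $n\in(N_j,m_j]$ and $\theta^*_n=1$ otherwise. All partial sums of $\sum_n\theta^*_n\varepsilon_ne_n$ lie in $K_{\mathcal B,\mathcal E}$, hence are bounded by $L$; yet for every $j$ the difference of the partial sums up to $m_j$ and up to $N_j$ equals $\sum_{N_j<n\le m_j}\theta^{(j)}_n\varepsilon_ne_n$, of norm $>\delta$, so these partial sums are not Cauchy. This contradicts bounded completeness, and therefore $r_N\to0$.

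The one genuinely delicate point is the boundedly complete case: there $\mathcal B$ need not be unconditional, so $\bigl\|\sum_{n>N}\theta_n\varepsilon_ne_n\bigr\|$ cannot be dominated by the single tail $\bigl\|\sum_{n>N}\varepsilon_ne_n\bigr\|$, and the uniform-in-$\theta$ smallness of the tails has to be squeezed out of bounded completeness via the gliding-hump construction above rather than from an unconditionality estimate. Everything else is routine manipulation of the basis projections and of the already-established lemmas on radii.
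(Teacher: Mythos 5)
Your proof is correct and follows essentially the same route as the paper's: reduce compactness to the uniform tail condition of Lemma~\ref{le:crcomp} plus closedness (Proposition~\ref{pr:closed}), handle the unconditional case by dominating tails by those of an extreme point via Lemma~\ref{le:propLT}, and handle the boundedly complete case by a gliding-hump splicing of coefficients whose partial sums all lie in the brick (hence are bounded by Lemma~\ref{th:bound}), so that bounded completeness forces a convergence contradicting the non-Cauchy humps. The only organizational difference is that you channel both cases through $r_N=\sup_{\theta}\bigl\|\sum_{n>N}\theta_n\varepsilon_n e_n\bigr\|$ and Lemma~\ref{le:propMMP}, whereas the paper estimates $\sup_{x\in K_{\mathcal B,\mathcal E}}\bigl\|\sum_{n>N}e_n^*(x)\,e_n\bigr\|$ directly; the two amount to the same bound.
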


\begin{proof}
Let $X$ be a Banach space with a normalized unconditional or boundedly complete basis $\mathcal B = (e_n)_{n=1}^\infty$ and biorthogonal functionals $e_k^* \in X_0^*$, and let $\mathcal E = (\varepsilon_n)_{n=1}^\infty$. Assume $r^{\rm ext}(K_{\mathcal B, \mathcal E}) < \infty$, and show that for $A = K_{\mathcal B, \mathcal E}$ we have \eqref{eq:crcomp2}.

\emph{The case where $\mathcal B$ is unconditional.} Let $M$ be the unconditional constant of $\mathcal B$ and $x_0$ any extreme point of $A$ (an extreme point exists, because $r^{\rm ext}(K_{\mathcal B, \mathcal E}) < \infty$). Fix any $\varepsilon > 0$ and choose $n_0 \in \mathbb N$ so that for every $N \geq n_0$ one has $\bigl\| \sum_{n>N} e_n^*(x_0) \, e_n \bigr\| < M^{-1} \varepsilon$. Then for each $x \in K_{\mathcal B, \mathcal E}$ and each $N \geq n_0$, taking into account $|e_n^*(x)| \leq \varepsilon_n = |e_n^*(x_0)|$ and Lemma~\ref{le:propLT}, we obtain
\begin{align*}
\Bigl\| \sum_{n>N} e_n^*(x) \, e_n \Bigr\| \leq M \Bigl\| \sum_{n>N} e_n^*(x_0) \, e_n \Bigr\| < \varepsilon.
\end{align*}

\emph{The case where $\mathcal B$ is boundedly complete.} Assume \eqref{eq:crcomp2} is false. Choose $\delta > 0$ and a sequence $x_{N_k} \in K_{\mathcal B, \mathcal E}$ so that $N_1 < N_2 < \ldots$, $\|x_{N_k}\| \geq 2 \delta$ and $e_n^*(x_{N_k}) = 0$ as $n \leq N_k$, that is, $x_{N_k} = \sum_{n > N_k} e_n^*(x_{N_k}) \, e_n$ for $k = 1,2, \ldots$. We are going to construct a block basis $(u_k)_{k=1}^\infty$ of $\mathcal B$ such that $u_k \in K_{\mathcal B, \mathcal E}$ and $\|u_k\| \geq \delta$ for $k = 1,2, \ldots$. Set $n_1 = 0$ and choose $n_2 > n_1$ so that
$$
\Bigl\| \sum_{n>n_2} e_n^*(x_{N_1}) \, e_n \Bigr\| < \delta.
$$
Then for $u_1 = \sum_{n=1}^{n_2} e_n^*(x_{N_1}) \, e_n$ one gets that $u_1 \in K_{\mathcal B, \mathcal E}$ and
$$
\|u_1\| \geq \bigl\| \sum_{n=1}^\infty e_n^*(x_{N_1}) \, e_n - \sum_{n>{n_2}} e_n^*(x_{N_1}) \, e_n \Bigr\| \geq 2 \delta - \delta = \delta.
$$

At the second step we choose $j_2 > j_1 = 1$ so that $N_{j_2} > n_2$. Thus,
$$
x_{N_{j_2}} = \sum_{n > N_{j_2}} e_n^*(x_{N_{j_2}}) \, e_n = \sum_{n > n_2} e_n^*(x_{N_{j_2}}) \, e_n.
$$

Choose $n_3 > n_2$ so that
$$
\Bigl\| \sum_{n>n_3} e_n^*(x_{N_{j_2}}) \, e_n \Bigr\| < \delta.
$$
Then for $u_2 = \sum_{n=n_2+1}^{n_3} e_n^*(x_{N_{j_2}}) \, e_n$ we obtain that $u_2 \in K_{\mathcal B, \mathcal E}$ and
$$
\|u_2\| \geq \bigl\| \sum_{n=n_2+1}^\infty e_n^*(x_{N_{j_2}}) \, e_n - \sum_{n>{n_3}} e_n^*(x_{N_{j_2}}) \, e_n \Bigr\| \geq 2 \delta - \delta = \delta.
$$

Proceeding like that step by step, we construct the desired block basis $(u_k)_{k=1}^\infty$. Now for each $n \leq N_1$ set $a_n = e_n^*(x_{N_1})$, and for every $N_{j_k} < n \leq N_{j_{k+1}}$ set $a_n = e_n^*(x_{N_{j_k}})$, $k = 2,3, \ldots$. Then
$$
u_1 = \sum_{n=1}^{N_1} a_n e_n \,\,\,\,\, \mbox{and} \,\,\, u_k = \sum_{n=N_{j_k}+1}^{N_{j_{k+1}}} a_n e_n \,\,\, \mbox{for} \,\,\, k = 2,3, \ldots.
$$

Since $S_n = \sum_{i=1}^n a_i e_i \in K_{\mathcal B, \mathcal E}$ for all $n \in \mathbb N$, we get $\|S_n\| \leq r^{\rm ext}(K_{\mathcal B, \mathcal E})$ by Theorem~\ref{th:bound}. Since the basis $(e_n)_{n=1}^\infty$ is boundedly complete, the series $\sum_{n=1}^\infty a_n e_n$ converges. However, this is impossible, because the Cauchy condition for its convergence contradicts the inequalities $\|u_k\| \geq \delta$, $k = 1,2, \ldots$. So, \eqref{eq:crcomp2} is valid.

Thus, \eqref{eq:crcomp2} holds anyway. By Lemma~\ref{le:crcomp}, the brick $K_{\mathcal B, \mathcal E}$ is precompact. By Proposition~\ref{pr:closed}, $K_{\mathcal B, \mathcal E}$ is compact.
\end{proof}

\subsection{A characterization of the compactness for bricks}

In this subsection we characterize the compactness for bricks, partially in terms of the following notion.

\begin{definition}
Let $X$ be a Banach space with a normalized basis $\mathcal B = (e_n)_{n=1}^\infty$ and biorthogonal functionals $e_k^* \in X_0^*$, and let $\mathcal E = (\varepsilon_n)_{n=1}^\infty$. The brick $K_{\mathcal B, \mathcal E}$ is called \emph{holistic} if for any sequence of scalars $(a_n)_{n=1}^\infty$ such that $|a_n| \leq \varepsilon_n$ for all $n \in \mathbb N$ the series $\sum_{n=1}^\infty a_n e_n$ converges.
\end{definition}

In other words, a holistic brick is a solid brick with an extreme point (cf. Definition~\ref{def:solid}).

The following result is important for the concept of entropy.

\begin{theorem}[The compactness test for bricks] \label{th:crcomp}
Let $X$ be a Banach space with a normalized basis $\mathcal B = (e_n)_{n=1}^\infty$ and biorthogonal functionals $e_k^* \in X_0^*$, and let $\mathcal E = (\varepsilon_n)_{n=1}^\infty$. Then the following assertions are equivalent.
\begin{enumerate}
  \item The brick $K_{\mathcal B, \mathcal E}$ is compact.
  \item The brick $K_{\mathcal B, \mathcal E}$ is holistic.
  \item The series $\sum_{n=1}^\infty \varepsilon_n e_n$ converges unconditionally.
  \item $r^{\rm unc}(K_{\mathcal B, \mathcal E}) < \infty$.
\end{enumerate}
\end{theorem}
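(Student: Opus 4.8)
The plan is to establish the cycle of implications $(4)\Rightarrow(3)\Rightarrow(2)\Rightarrow(1)\Rightarrow(4)$, which together with some nearly trivial observations gives all the stated equivalences. The implication $(4)\Rightarrow(3)$ is where most of the substance lies, so I would do it first, and I expect it to be the main obstacle. Assume $r^{\rm unc}(K_{\mathcal B,\mathcal E})<\infty$; by Definition~\ref{def:radii}(2) this means $\sup_{\theta_n=\pm1}\bigl\|\sum_{n=1}^\infty\theta_n\varepsilon_n e_n\bigr\|<\infty$, and in particular for \emph{every} sign sequence $\Theta=(\theta_n)$ the series $\sum\theta_n\varepsilon_n e_n$ converges. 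By Lemma~\ref{le:sin} applied to $x_n=\varepsilon_n e_n$, condition $(ii)$ of that lemma is exactly the convergence of $\sum\theta_n\varepsilon_n e_n$ for all choices of signs, which is equivalent to the unconditional convergence of $\sum\varepsilon_n e_n$. That is $(3)$. (One must be a little careful: Lemma~\ref{le:sin}$(ii)$ requires convergence for \emph{every} sign sequence, not merely uniform boundedness of partial sums, but finiteness of $r^{\rm unc}$ delivers exactly that, since the norm of a divergent series is set to $\infty$ in Definition~\ref{def:radii}(2).)

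Next, $(3)\Rightarrow(2)$. Suppose $\sum\varepsilon_n e_n$ converges unconditionally; then by Lemma~\ref{le:sin}, equivalence $(i)\Leftrightarrow(iii)$ applied to $x_n=\varepsilon_n e_n$, for any scalars $(b_n)$ with $|b_n|\le 1$ the series $\sum b_n\varepsilon_n e_n$ converges. Given any $(a_n)$ with $|a_n|\le\varepsilon_n$, write $a_n=b_n\varepsilon_n$ where $b_n=a_n/\varepsilon_n$ when $\varepsilon_n>0$ and $b_n=0$ otherwise; then $|b_n|\le1$, so $\sum a_n e_n$ converges. Hence every such series converges, i.e.\ $K_{\mathcal B,\mathcal E}$ is holistic, which is $(2)$. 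The implication $(2)\Rightarrow(1)$: a holistic brick is by definition a solid brick possessing an extreme point (the point $\sum(-1)^{n+1}\varepsilon_n e_n$, or any $\sum a_n e_n$ with $|a_n|=\varepsilon_n$, lies in $K_{\mathcal B,\mathcal E}$ because the series converges). In particular it has an extreme point, and moreover I claim its extreme radius is finite: by Theorem~\ref{pr:kirprad}(1), $r^{\rm ext}\le r^{\rm unc}$, so it suffices to observe $r^{\rm unc}<\infty$ here — but that is precisely $(4)$, which I would rather not invoke yet. Instead, to get $(1)$ directly from $(2)$ I would verify $r^{\rm ext}(K_{\mathcal B,\mathcal E})<\infty$ by hand: every extreme point has the form $x=\sum\alpha_n e_n$ with $|\alpha_n|=\varepsilon_n$, and one can bound $\|x\|$ uniformly using Lemma~\ref{le:propMMP} together with holisticity — the cleanest route is actually to note that since $K_{\mathcal B,\mathcal E}$ is holistic, in particular $\sum\varepsilon_n e_n$ converges, so $r^{\rm unc}<\infty$ after all (a holistic brick has finite unconditional radius because convergence of every $\sum b_n\varepsilon_n e_n$ with $|b_n|\le 1$, via the unconditional convergence it is equivalent to, forces $\sup_\Theta\|\sum\theta_n\varepsilon_n e_n\|<\infty$ by the Banach--Steinhaus / uniform boundedness argument underlying the finiteness of the unconditional constant). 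With $r^{\rm ext}\le r^{\rm unc}<\infty$ in hand, Theorem~\ref{th:comp} is not directly applicable (the basis need not be unconditional), so instead I appeal to the following: holisticity says exactly that $K_{\mathcal B,\mathcal E}=\{\sum a_n e_n: |a_n|\le\varepsilon_n\}$ with all these series convergent, and then the tail estimate \eqref{eq:crcomp2} for $A=K_{\mathcal B,\mathcal E}$ follows because $\sum_{n>N}e_n^*(x)e_n$ is a partial-signed piece dominated, via Lemma~\ref{le:propMMP}, by $\max_{\theta}\|\sum_{n>N}\theta_n\varepsilon_n e_n\|$, and the latter tends to $0$ as $N\to\infty$ since $\sum\theta_n\varepsilon_n e_n$ converges uniformly in $\Theta$ — this last point is where one uses $r^{\rm unc}<\infty$ to get \emph{uniform} Cauchyness of the sign-series, which I expect to require a small separate lemma or an appeal to the structure of the unconditional convergence. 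Then Lemma~\ref{le:crcomp} gives precompactness and Proposition~\ref{pr:closed} gives compactness, establishing $(1)$.

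Finally $(1)\Rightarrow(4)$: if $K_{\mathcal B,\mathcal E}$ is compact then it is bounded, and it contains every partial sum $\sum_{n\le N}\theta_n\varepsilon_n e_n$; more to the point, by Lemma~\ref{le:crcomp} the tails $\sup_{x\in K}\|\sum_{n>N}e_n^*(x)e_n\|\to0$, and since for any fixed sign sequence $\Theta$ the vectors $\sum_{n\le N}\theta_n\varepsilon_n e_n$ lie in $K_{\mathcal B,\mathcal E}$ and form a Cauchy sequence (their differences are tails controlled by the displayed limit), each series $\sum\theta_n\varepsilon_n e_n$ converges; boundedness of $K_{\mathcal B,\mathcal E}$ then bounds all these sums uniformly, giving $r^{\rm unc}(K_{\mathcal B,\mathcal E})<\infty$. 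This closes the cycle. The only genuinely delicate point throughout is the passage between ``every sign-series converges'' and ``the sign-series converge uniformly in the sign sequence,'' i.e.\ between $(3)$ and $(4)$; everywhere else Lemma~\ref{le:sin}, Lemma~\ref{le:propMMP} and Lemma~\ref{le:crcomp} do the work almost mechanically.
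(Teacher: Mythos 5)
Your plan is correct and follows essentially the same route as the paper: Lemma~\ref{le:sin} handles the passage between (2), (3) and (4), the Cauchy-condition/block argument gives $(1)\Rightarrow(2)$, and $(2)\Rightarrow(1)$ rests on the uniform tail estimate plus Lemma~\ref{le:crcomp} and Proposition~\ref{pr:closed}. The one step you defer to ``a small separate lemma'' --- that $\sup_{|a_n|\le\varepsilon_n}\bigl\|\sum_{n>N}a_ne_n\bigr\|\to0$ --- is exactly the estimate \eqref{eq:ccbvn} that the paper proves directly by gluing putative bad blocks into a single coefficient sequence violating the Cauchy condition, so no new idea is missing.
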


\begin{proof}
The equivalence (2) $\Leftrightarrow$ (3) follows from Lemma~\ref{le:sin}.

(1) $\Rightarrow$ (2). Let $(a_n)_{n=1}^\infty$ be a sequence of scalars such that $|a_n| \leq \varepsilon_n$ for all $n \in \mathbb N$. Set $x_N = \sum_{n=1}^N a_n e_n$ and show that the series $\sum_{n=1}^\infty a_n e_n$ converges. Assume, on the contrary, that this is false. Then the series does not meet the Cauchy condition, and hence, there are $\delta > 0$, sequences of integers $1 = n_0 < n_1 < \ldots$ and $(\ell_k)_{k=1}^\infty$ such that $\ell_k \leq n_{k+1} - n_k$ and $\|u_k\| \geq \delta$ for $k = 1,2, \ldots$, where $u_k = \sum_{j = n_k+1}^{n_k+\ell} a_j e_j$. Observe that $u_k \in K_{\mathcal B, \mathcal E}$ for all $k$. Denote by $K$ the basis constant of $\mathcal B$ and prove that $\|u_k - u_m\| \geq \delta/K$ for all $k < m$, which contradicts the compactness of $K_{\mathcal B, \mathcal E}$.Indeed, $\delta \leq \|u_k\| \leq K \|u_k - u_m\|$.

(2) $\Rightarrow$ (1). We prove that
\begin{equation} \label{eq:ccbvn}
\lim_{N \to \infty} \sup_{|a_n| \leq \varepsilon_n} \Bigl\| \sum_{n > N} a_n e_n \Bigr\| = 0.
\end{equation}

Indeed, if this were false, we would choose $\delta > 0$, последовательность номеров $0 = n_0 < n_1 < \ldots$ and a sequence $(a_n)_{n=1}^\infty$, $|a_n| \leq \varepsilon_n$ so that
$$
\Bigl\| \sum_{j = n_{k-1} +1}^{n_k} a_j e_j \Bigr\| \geq \delta,
$$
which contradicts the Cauchy condition for $\sum_{n=1}^\infty a_n e_n$.

Now we prove that, for every $\delta > 0$ the brick $K_{\mathcal B, \mathcal E}$ contains a finite $\delta$-net. So, fix any $\delta > 0$ and choose by \eqref{eq:ccbvn}, $N \in \mathbb N$ so that for any sequence of scalars $(a_n)_{n>N}$, $|a_n| \leq \varepsilon_n$ one has
\begin{equation} \label{2eqkkdjfn}
\Bigl\| \sum_{n > N} a_n e_n \Bigr\| \leq \frac{\delta}{2} \, .
\end{equation}
Let $K' = K_{\mathcal B, \mathcal E'}$ be the brick which corresponds to the same basis and the following half-height $\mathcal E' = (\varepsilon_n')_{n=1}^\infty$, where $\varepsilon_n' = \varepsilon_n$ for $n \leq N$ and $\varepsilon_n' = 0$ for $n > N$. Using the compactness of the closed bounded subset $K'$ of the finite dimensional space $X' = [e_n]_{n=1}^N$, we choose in $K'$ a finite $\delta/2$-net $S = \{x_1, \ldots, x_m\}$. Show that $S$ is a $\delta$-net in $K_{\mathcal B, \mathcal E}$. Indeed, $S \subseteq K' \subseteq K_{\mathcal B, \mathcal E}$, and therefore, $S \subseteq K_{\mathcal B, \mathcal E}$. Let $x \in K_{\mathcal B, \mathcal E}$ be any element. Choose $k \in \{1, \ldots, m\}$ so that $\|x' - x_k\| < \delta/2$, where $x' = \sum_{n=1}^N e_n^*(x) \, e_n \in X'$. Then, in view of \eqref{2eqkkdjfn}, we obtain
$$
\|x_k - x\| = \Bigl\| x_k - \sum_{n=1}^\infty e_n^*(x) \, e_n \Bigr\| \leq \|x_k - x'\| + \Bigl\| \sum_{n > N} e_n^*(x) \, e_n \Bigr\| < \frac{\delta}{2} + \frac{\delta}{2} = \delta.
$$

It is left to show the equivalence of (4) to the other conditions. Indeed, the implication (4) $\Rightarrow$ (3) is obvious, and back, (3) together with (1) implies (4).
\end{proof}

As a consequence of theorems \ref{pr:kirprad} and \ref{th:crcomp} we get the following result.

\begin{cor} \label{cor:compfin}
All the radii of any compact brick are finite and coincide.
\end{cor}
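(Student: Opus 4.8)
The plan is to deduce Corollary~\ref{cor:compfin} directly by combining the two big structural theorems just established, with no new computation required. Suppose $K_{\mathcal B, \mathcal E}$ is a compact brick. First I would invoke the implication (1)~$\Rightarrow$~(4) of Theorem~\ref{th:crcomp} (the compactness test for bricks) to conclude that $r^{\rm unc}(K_{\mathcal B, \mathcal E}) < \infty$. Once finiteness of the unconditional radius is in hand, item~(3) of Theorem~\ref{pr:kirprad} applies verbatim and yields
\[
r^{\rm ext}(K_{\mathcal B, \mathcal E}) = r^{\rm unc}(K_{\mathcal B, \mathcal E}) = \sup_{x \in K_{\mathcal B, \mathcal E}} \|x\| < \infty,
\]
which is exactly the assertion that all three radii are finite and coincide.

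The only point that needs a word of care is that Definition~\ref{def:radii} introduces three radii, while Theorem~\ref{pr:kirprad}(3) literally mentions the extreme, unconditional and absolute radii — so I should simply note that these are all the radii under consideration, and hence ``all the radii'' in the statement of the corollary refers precisely to this triple. No separate argument is needed for the absolute radius, since it is sandwiched between the other two by the chain of equalities above.

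I do not anticipate any genuine obstacle here; the corollary is a bookkeeping consequence of the theorems. If anything, the one thing to be vigilant about is the direction of the implications: Theorem~\ref{th:crcomp} is what supplies $r^{\rm unc} < \infty$ from compactness (this is the substantive input), whereas Theorem~\ref{pr:kirprad} is purely about reconciling the radii once one of them is known to be finite. So the proof is: apply Theorem~\ref{th:crcomp}, then apply Theorem~\ref{pr:kirprad}(3), and stop.
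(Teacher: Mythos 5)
Your proposal is correct and is precisely the paper's intended argument: the paper derives the corollary as an immediate consequence of Theorem~\ref{th:crcomp} (compactness gives $r^{\rm unc}(K_{\mathcal B, \mathcal E}) < \infty$) combined with item~(3) of Theorem~\ref{pr:kirprad}. Nothing is missing.
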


The implication (3) $\Rightarrow$ (1) of Theorem~\ref{th:crcomp} gives the following Gelfand theorem \cite[Theorem~1.3.4]{KK}.

\begin{cor}[Gelfand's theorem] \label{cor:Gelfand}
If a series $\sum_{n=1}^\infty x_n$ of elements of a Banach space $X$ unconditionally converges then the set of all sums $\sum_{n=1}^\infty \theta_n x_n$, $\theta_n = \pm 1$ is compact.
\end{cor}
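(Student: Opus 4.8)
The plan is to reduce the statement to the implication (3)$\Rightarrow$(1) of Theorem~\ref{th:crcomp}. The one point of care is that the given sequence $(x_n)_{n=1}^\infty$ need not be a basic sequence in $X$ (some of the $x_n$ may even be collinear), so $S := \{\sum_{n=1}^\infty \theta_n x_n : \theta_n = \pm 1\}$ cannot be read off directly as the extreme-point set of a brick in $X$. I would first dispose of the degenerate cases: terms with $x_n = 0$ can be deleted without affecting $S$, and if only finitely many $x_n$ are nonzero then $S$ is finite, hence compact; so one may assume $x_n \neq 0$ for all $n$.

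Then I would manufacture the brick in an auxiliary space. Let $Z$ be the completion of the space of finitely supported scalar sequences under $\|(\xi_n)\|_Z = \sup_N \bigl\| \sum_{n \le N} \xi_n x_n \bigr\|_X$; this is a norm because $x_n \neq 0$, and the unit vectors $\hat e_n$ form a (monotone) basis of $Z$, which becomes normalized after replacing $\hat e_n$ by $e_n = \|x_n\|_X^{-1} \hat e_n$. Taking $\varepsilon_n = \|x_n\|_X$, one has $\sum_{n} \varepsilon_n e_n = \sum_n \hat e_n$, and by Lemma~\ref{le:sin} the unconditional convergence of this series in $Z$ amounts to the convergence, for every sign sequence $(\theta_n)$, of $\sum_n \theta_n \hat e_n$ in $Z$; by the definition of $\|\cdot\|_Z$ the partial sums of the latter are $Z$-Cauchy exactly because the partial sums of $\sum_n \theta_n x_n$ are $X$-Cauchy, which holds by the hypothesis together with Lemma~\ref{le:sin}. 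Hence Theorem~\ref{th:crcomp}, (3)$\Rightarrow$(1), applies and the brick $K_{\mathcal B, \mathcal E}$ (with $\mathcal B = (e_n)$, $\mathcal E = (\varepsilon_n)$) is compact in $Z$.

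To finish I would push the brick forward to $X$. The operator $U \colon Z \to X$ with $U \hat e_n = x_n$ is well defined and bounded, since $\|U\xi\|_X \le \|\xi\|_Z$ for finitely supported $\xi$, and $U\bigl(\sum_n \xi_n \hat e_n\bigr) = \sum_n \xi_n x_n$ by continuity. The extreme points of $K_{\mathcal B, \mathcal E}$ are precisely the vectors $\sum_n \theta_n \hat e_n$, $\theta_n = \pm 1$, and this set $E$ is closed in $Z$ (the coordinate functionals are continuous and $\{-1,1\}$ is closed), hence compact as a closed subset of the compact $K_{\mathcal B, \mathcal E}$; consequently $S = U(E)$ is compact, being a continuous image of a compact set. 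Alternatively, and perhaps more economically, one can skip $Z$ altogether: $\{-1,1\}^{\mathbb N}$ is compact in the product topology and the map $\theta \mapsto \sum_n \theta_n x_n$ into $X$ is continuous, so $S$ is its continuous image.

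I expect the genuine content to sit in the uniform tail estimate $\lim_{N \to \infty} \sup_{|a_n| \le 1} \bigl\| \sum_{n > N} a_n x_n \bigr\|_X = 0$ --- the exact analogue of \eqref{eq:ccbvn} --- which is what makes both the brick $K_{\mathcal B, \mathcal E}$ compact and the sign map $\theta \mapsto \sum_n \theta_n x_n$ continuous. It would be proved by contradiction just as \eqref{eq:ccbvn} was: if it failed, I would splice together successive blocks of coefficients with $|a_n| \le 1$ whose block-norms are bounded below into a single sequence $(a_n)$ with $|a_n| \le 1$, whose series $\sum_n a_n x_n$ converges by Lemma~\ref{le:sin}(iii) yet violates the Cauchy condition --- a contradiction.
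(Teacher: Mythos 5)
Your argument is correct, and it is in substance the paper's intended derivation --- the paper ``proves'' this corollary in one sentence, by simply citing the implication (3) $\Rightarrow$ (1) of Theorem~\ref{th:crcomp} --- but you supply the bridging work that this citation quietly assumes. The point you isolate is exactly the right one: $(x_n)$ need not be a basic sequence, so the signed sums do not sit inside a brick of $X$ as stated. Your construction of the auxiliary space $Z$ with the monotone basis $(\hat e_n)$ (note $\|\hat e_n\|_Z = \|x_n\|_X$, so your normalization and the choice $\varepsilon_n = \|x_n\|_X$ fit together correctly), the transfer of unconditional convergence to $\sum_n \hat e_n$ via the Cauchy condition, the compactness of the brick by (3) $\Rightarrow$ (1), the closedness of the extreme-point set $E$ inside it, and the norm-one push-forward $U$ with $S = U(E)$ together make the reduction rigorous, including the closedness of $S$ (not merely precompactness), which the one-line version also leaves implicit. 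Your second route --- $\{-1,1\}^{\mathbb N}$ compact in the product topology, the sign map continuous by the uniform tail estimate, which you prove by the same block-splicing contradiction as \eqref{eq:ccbvn} --- is genuinely more economical: it bypasses bricks and the auxiliary space entirely and proves the corollary from Lemma~\ref{le:sin} alone. What the brick route buys is consistency with the paper's framework (and it exhibits $S$ as the extreme-point set of a compact brick, which is of independent interest here); what the direct route buys is brevity and independence from Theorem~\ref{th:crcomp}, which matters since the paper itself remarks that (3) $\Rightarrow$ (1) can conversely be deduced from Gelfand's theorem. Either version is acceptable as a complete proof.
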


Remark also that by the compactness of the convex hull of a compact set \cite[p.~364]{B}, the implication (3) $\Rightarrow$ (1) one can deduce from the above Gelfand theorem.

\section{Entropy}

The entropy of a set $A$ is going to be the infimum of the radii of bricks containing $A$. Depending on a type of bricks, we get different types of entropy.

\begin{definition}
Let $X$ be a Banach space. The \emph{entropy} and the \emph{unconditional entropy} of a subset $A \subseteq X$ is a number or the symbol $\infty$, defined, respectively, by
\begin{enumerate}
  \item $\displaystyle{\mathbf{E}(A) = \inf_{A \subseteq K_{\mathcal B, \mathcal E}} r^{\rm unc}(K_{\mathcal B, \mathcal E})}$ (here the infimum is taken over all bricks containing $A$);
  \item $\displaystyle{\mathbf{E}_0(A) = \inf \bigl\{ r^{\rm unc}(K_{\mathcal B, \mathcal E}): \,\, A \subseteq K_{\mathcal B, \mathcal E}, \,\, K_{\mathcal B, \mathcal E} - 1\mbox{-unconditional brick} \bigr\}.}$
\end{enumerate}
\end{definition}

In the case where no brick (of the corresponding type) of finite unconditional radius contains $A$, we set the corresponding entropy to be equal $\infty$. In particular, if $X$ has no basis then there is no brick in $X$, and hence all subsets of $X$ has infinite entropy.

\subsection{Common properties}

In the following statements we summarize simple properties of entropy.

\begin{prop} \label{pr:estentrr}
Let $X$ be a Banach space. Then
\begin{enumerate}
  \item if $A \subseteq B \subseteq X$ then $\mathbf{E}(A) \leq \mathbf{E}(B)$ and $\mathbf{E}_0(A) \leq \mathbf{E}_0(B)$;
  \item if $A \subseteq X$ then $\mathbf{E}_0(A) \geq \mathbf{E} (A) \geq \sup\limits_{x \in A} \|x\|$;
  \item if $K_{\mathcal B, \mathcal E}$ is a compact brick then $\mathbf{E}(K_{\mathcal B, \mathcal E}) = \sup\limits_{x \in K_{\mathcal B, \mathcal E}} \|x\|$;
  \item if $K_{\mathcal B, \mathcal E}$ is a $1$-unconditional compact brick then $\mathbf{E}_0(K_{\mathcal B, \mathcal E}) = \sup\limits_{x \in K_{\mathcal B, \mathcal E}} \|x\|$.
\end{enumerate}
\end{prop}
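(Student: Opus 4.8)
The plan is to verify the four items in order, reducing each to results already established. Item (1) is the monotonicity of the entropies: if $A \subseteq B$, then every brick containing $B$ also contains $A$, so the infimum defining $\mathbf{E}(A)$ is taken over a larger family of bricks than the one defining $\mathbf{E}(B)$; hence $\mathbf{E}(A) \leq \mathbf{E}(B)$. The same argument applied to the family of $1$-unconditional bricks gives $\mathbf{E}_0(A) \leq \mathbf{E}_0(B)$. This step is immediate from the definitions.

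For item (2), the inequality $\mathbf{E}_0(A) \geq \mathbf{E}(A)$ follows because the infimum in the definition of $\mathbf{E}_0(A)$ ranges over a subfamily (the $1$-unconditional bricks) of the family of all bricks used for $\mathbf{E}(A)$; a smaller family yields a larger infimum. For the inequality $\mathbf{E}(A) \geq \sup_{x \in A} \|x\|$, I would take any brick $K_{\mathcal B, \mathcal E}$ with $A \subseteq K_{\mathcal B, \mathcal E}$; then for every $x \in A$ one has $\|x\| \leq \sup_{y \in K_{\mathcal B, \mathcal E}} \|y\|$, and by item (3) of Definition~\ref{def:radii} together with Theorem~\ref{pr:kirprad}(1), $\sup_{y \in K_{\mathcal B, \mathcal E}} \|y\| \leq r^{\rm unc}(K_{\mathcal B, \mathcal E})$ when the latter is finite (and trivially when it is $\infty$). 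Taking the supremum over $x \in A$ and then the infimum over all such bricks yields $\sup_{x \in A} \|x\| \leq \mathbf{E}(A)$.

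For item (3), let $K_{\mathcal B, \mathcal E}$ be a compact brick. On the one hand, $K_{\mathcal B, \mathcal E}$ is itself a brick containing $A = K_{\mathcal B, \mathcal E}$, so $\mathbf{E}(K_{\mathcal B, \mathcal E}) \leq r^{\rm unc}(K_{\mathcal B, \mathcal E})$, and by Corollary~\ref{cor:compfin} (applied to the compact brick $K_{\mathcal B, \mathcal E}$) all its radii are finite and coincide, so $r^{\rm unc}(K_{\mathcal B, \mathcal E}) = \sup_{x \in K_{\mathcal B, \mathcal E}} \|x\|$; hence $\mathbf{E}(K_{\mathcal B, \mathcal E}) \leq \sup_{x \in K_{\mathcal B, \mathcal E}} \|x\|$. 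On the other hand, the reverse inequality $\mathbf{E}(K_{\mathcal B, \mathcal E}) \geq \sup_{x \in K_{\mathcal B, \mathcal E}} \|x\|$ is exactly item (2) applied to $A = K_{\mathcal B, \mathcal E}$. This gives equality. Item (4) is proved the same way, now observing that a $1$-unconditional compact brick $K_{\mathcal B, \mathcal E}$ is admissible in the infimum defining $\mathbf{E}_0$, so $\mathbf{E}_0(K_{\mathcal B, \mathcal E}) \leq r^{\rm unc}(K_{\mathcal B, \mathcal E}) = \sup_{x \in K_{\mathcal B, \mathcal E}} \|x\|$ by Corollary~\ref{cor:compfin}, while the reverse inequality follows from the chain $\mathbf{E}_0(K_{\mathcal B, \mathcal E}) \geq \mathbf{E}(K_{\mathcal B, \mathcal E}) \geq \sup_{x \in K_{\mathcal B, \mathcal E}} \|x\|$ of item (2).

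The only mild subtlety, and the point I would be most careful about, is the use of Corollary~\ref{cor:compfin} in items (3) and (4): one must know that a compact brick has all three radii finite and equal, which is precisely that corollary (itself a consequence of Theorems~\ref{pr:kirprad} and~\ref{th:crcomp}). Everything else is a direct manipulation of infima and suprema, so no genuine obstacle arises.
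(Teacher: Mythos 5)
Your proof is correct and follows essentially the same route as the paper: monotonicity of the infimum for (1), the coincidence of radii for bricks of finite unconditional radius for the nontrivial inequality $\mathbf{E}(A) \geq \sup_{x \in A} \|x\|$ in (2), and the fact that a compact brick contains itself (together with Corollary~\ref{cor:compfin}) for (3) and (4). One small correction: in item (2) the inequality $\sup_{y \in K_{\mathcal B, \mathcal E}} \|y\| \leq r^{\rm unc}(K_{\mathcal B, \mathcal E})$ when $r^{\rm unc}(K_{\mathcal B, \mathcal E}) < \infty$ is item (3) of Theorem~\ref{pr:kirprad} (which rests on Lemma~\ref{th:bound}), not item (1) of that theorem, which only compares $r^{\rm ext}$ with $r^{\rm unc}$ and does not by itself control the absolute radius.
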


\begin{proof}
(1) is obvious.

(2) The left-hand side inequality is obvious. We prove the right-hand side inequality. Let $\mathbf{E}(A) < \infty$, and let $K_{\mathcal B, \mathcal E}$ be any brick with $A \subseteq K_{\mathcal B, \mathcal E}$ and $r^{\rm unc}(K_{\mathcal B, \mathcal E}) < \infty$. Then
$$
r^{\rm unc}(K_{\mathcal B, \mathcal E}) \stackrel{\mbox{\tiny Theorem~\ref{pr:kirprad} (3)}}{=} \sup\limits_{x \in K_{\mathcal B, \mathcal E}} \|x\| \geq \sup\limits_{x \in A} \|x\|.
$$

Hence,
$$
\mathbf{E}(A) = \inf_{A \subseteq K_{\mathcal B, \mathcal E}} r^{\rm unc}(K_{\mathcal B, \mathcal E}) \geq \sup\limits_{x \in A} \|x\|.
$$

(3) and (4) follow from (2) and the fact that a brick does contain itself.
\end{proof}

As a consequence of Theorem~\ref{th:crcomp} we obtain the next statement.

\begin{prop} \label{pr:enmo}
Let $A$ be a subset of a Banach space $X$. If $\mathbf{E}(A) < \infty$ then $A$ is precompact.
\end{prop}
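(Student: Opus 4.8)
The plan is to reduce everything to the compactness test for bricks (Theorem~\ref{th:crcomp}) together with the monotonicity of entropy. First I would observe that if $\mathbf{E}(A) < \infty$, then by the definition of entropy there exists a brick $K_{\mathcal B, \mathcal E}$ with $A \subseteq K_{\mathcal B, \mathcal E}$ and $r^{\rm unc}(K_{\mathcal B, \mathcal E}) < \infty$ (indeed, the infimum defining $\mathbf{E}(A)$ being finite means it is taken over a nonempty family of bricks of finite unconditional radius, so at least one such brick exists — in fact for any $\varepsilon > 0$ one with $r^{\rm unc} < \mathbf{E}(A) + \varepsilon$).

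Next I would apply Theorem~\ref{th:crcomp}: the condition $r^{\rm unc}(K_{\mathcal B, \mathcal E}) < \infty$ is precisely item (4) of that theorem, which is equivalent to item (1), namely that $K_{\mathcal B, \mathcal E}$ is compact. Hence $A$ is a subset of a compact set, and therefore $A$ is precompact by the very definition of precompactness recalled in the Introduction (equivalently, by the Hausdorff criterion, since $\overline{A}$ is a closed subset of the compact set $K_{\mathcal B, \mathcal E}$, using that the brick is closed by Proposition~\ref{pr:closed}).

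I do not expect any serious obstacle here: the statement is essentially a corollary of Theorem~\ref{th:crcomp}. The only point requiring a word of care is the unwinding of the definition of $\mathbf{E}(A) < \infty$ into the existence of a concrete brick of finite unconditional radius containing $A$; this is handled by the convention stated right after the definition of entropy (if no brick of finite unconditional radius contains $A$, then $\mathbf{E}(A) = \infty$). Everything else is a direct citation.

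\begin{proof}
Assume $\mathbf{E}(A) < \infty$. By the definition of $\mathbf{E}(A)$ and the convention that $\mathbf{E}(A) = \infty$ whenever no brick of finite unconditional radius contains $A$, there exists a brick $K_{\mathcal B, \mathcal E}$ with $A \subseteq K_{\mathcal B, \mathcal E}$ and $r^{\rm unc}(K_{\mathcal B, \mathcal E}) < \infty$. By the implication (4) $\Rightarrow$ (1) of Theorem~\ref{th:crcomp}, the brick $K_{\mathcal B, \mathcal E}$ is compact. Thus $A$ is contained in a compact set and hence $A$ is precompact.
\end{proof}
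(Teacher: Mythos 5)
Your proof is correct and is exactly the argument the paper intends: the statement is presented there as an immediate consequence of Theorem~\ref{th:crcomp}, obtained by extracting a brick of finite unconditional radius containing $A$ and invoking the implication (4) $\Rightarrow$ (1). Your extra care in unwinding the convention behind $\mathbf{E}(A) = \infty$ is a sensible addition but does not change the route.
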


Given a subset $A$ of a Banach space $X$, by $\overline{{\rm absconv} \, (A)}$ we denote the closure of an absolute convex hull of $A$, which by definition equals the least closed absolute convex set in $X$ containing $A$. The next assertion follows from from Proposition~\ref{pr:closed}.

\begin{prop} \label{pr:en1}
For any subset $A$ of a Banach space $X$ one has $\mathbf{E}\bigl( \overline{{\rm absconv} \, (A)} \bigr) = \mathbf{E}(A)$ and $\mathbf{E}_0\bigl( \overline{{\rm absconv} \, (A)} \bigr) = \mathbf{E}_0(A)$.
\end{prop}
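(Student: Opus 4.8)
The plan is to prove both equalities in parallel, since the arguments are identical except for the class of bricks over which the infimum is taken. The key observation is that every brick $K_{\mathcal B, \mathcal E}$ is, by Proposition~\ref{pr:closed}, a closed absolutely convex set; consequently, if $A \subseteq K_{\mathcal B, \mathcal E}$ then the least closed absolutely convex set containing $A$, namely $\overline{{\rm absconv}\,(A)}$, is also contained in $K_{\mathcal B, \mathcal E}$.

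First I would establish the inequality $\mathbf{E}\bigl(\overline{{\rm absconv}\,(A)}\bigr) \leq \mathbf{E}(A)$ (and likewise for $\mathbf{E}_0$). Take any brick $K_{\mathcal B, \mathcal E}$ with $A \subseteq K_{\mathcal B, \mathcal E}$; by the observation above, $\overline{{\rm absconv}\,(A)} \subseteq K_{\mathcal B, \mathcal E}$ as well, so this same brick is admissible in the infimum defining $\mathbf{E}\bigl(\overline{{\rm absconv}\,(A)}\bigr)$. Hence $\mathbf{E}\bigl(\overline{{\rm absconv}\,(A)}\bigr) \leq r^{\rm unc}(K_{\mathcal B, \mathcal E})$ for every such brick, and taking the infimum over all bricks containing $A$ gives the desired inequality. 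For the $1$-unconditional entropy the argument is verbatim the same, restricting attention to $1$-unconditional bricks throughout. The reverse inequality $\mathbf{E}(A) \leq \mathbf{E}\bigl(\overline{{\rm absconv}\,(A)}\bigr)$ is immediate from Proposition~\ref{pr:estentrr}(1), since $A \subseteq \overline{{\rm absconv}\,(A)}$; the same applies to $\mathbf{E}_0$. Combining the two inequalities yields the claimed equalities.

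I do not anticipate any real obstacle here: the whole content is the containment $\overline{{\rm absconv}\,(A)} \subseteq K_{\mathcal B, \mathcal E}$, which is precisely the universal property of the closed absolutely convex hull together with the fact, recorded in Proposition~\ref{pr:closed}, that bricks are closed and absolutely convex. The monotonicity statement Proposition~\ref{pr:estentrr}(1) handles the trivial direction. Thus the proof is a short two-line argument invoking Propositions~\ref{pr:closed} and~\ref{pr:estentrr}, written out once and then noted to apply equally to $\mathbf{E}_0$.

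\begin{proof}
Since $A \subseteq \overline{{\rm absconv}\,(A)}$, Proposition~\ref{pr:estentrr}(1) gives $\mathbf{E}(A) \leq \mathbf{E}\bigl(\overline{{\rm absconv}\,(A)}\bigr)$ and $\mathbf{E}_0(A) \leq \mathbf{E}_0\bigl(\overline{{\rm absconv}\,(A)}\bigr)$. For the reverse inequalities, let $K_{\mathcal B, \mathcal E}$ be any brick with $A \subseteq K_{\mathcal B, \mathcal E}$. By Proposition~\ref{pr:closed}, $K_{\mathcal B, \mathcal E}$ is a closed absolutely convex subset of $X$ containing $A$, hence it contains the least such set, that is, $\overline{{\rm absconv}\,(A)} \subseteq K_{\mathcal B, \mathcal E}$. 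Therefore every brick admissible in the infimum defining $\mathbf{E}(A)$ is also admissible in the infimum defining $\mathbf{E}\bigl(\overline{{\rm absconv}\,(A)}\bigr)$, so
$$
\mathbf{E}\bigl(\overline{{\rm absconv}\,(A)}\bigr) = \inf_{\overline{{\rm absconv}\,(A)} \subseteq K_{\mathcal B, \mathcal E}} r^{\rm unc}(K_{\mathcal B, \mathcal E}) \leq \inf_{A \subseteq K_{\mathcal B, \mathcal E}} r^{\rm unc}(K_{\mathcal B, \mathcal E}) = \mathbf{E}(A).
$$
The same argument, restricted to $1$-unconditional bricks, yields $\mathbf{E}_0\bigl(\overline{{\rm absconv}\,(A)}\bigr) \leq \mathbf{E}_0(A)$. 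Combining the inequalities proves both equalities.
\end{proof}
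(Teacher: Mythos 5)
Your proof is correct and is exactly the argument the paper intends: the paper gives no details, merely noting that the statement ``follows from Proposition~\ref{pr:closed}'', and your write-up supplies precisely that reasoning --- monotonicity from Proposition~\ref{pr:estentrr}(1) in one direction, and the containment $\overline{{\rm absconv}\,(A)} \subseteq K_{\mathcal B, \mathcal E}$ via closedness and absolute convexity of bricks in the other.
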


\subsection{Sudakov's characteristic}

Following Sudakov \cite{Sud1} and generalizing his notions introduced for an orthonormal basis of a Hilbert space to a normalized basis of a Banach space, we give some definitions.

Let $X$ be a Banach space with a normalized basis $\mathcal B = (e_n)_{n=1}^\infty$ and the biorthogonal functionals $(e_n^*)_{n=1}^\infty$.

\begin{definition}
The \emph{clearances} of a subset $A \subset X$ relatively to the basis $\mathcal B$ are set to be the sequence $\gamma_{\mathcal B, n}(A) \in [0,+\infty]$ defined by
\begin{equation} \label{eq:gabarity}
\gamma_{\mathcal B, n}(A) = \sup_{x \in A} |e_n^*(x)|, \,\,\, n = 1,2, \ldots.
\end{equation}
\end{definition}

\begin{definition}
The \emph{radius} of a subset $A \subset X$ relatively to the basis $\mathcal B$ is the number or symbol $r_\mathcal B (A) \in [0,+\infty]$ defined by
\begin{equation} \label{eq:radii}
r_\mathcal B (A) = \sup_{\theta_n = \pm 1} \Bigl\| \sum_{n=1}^\infty \theta_n \gamma_{\mathcal B, n}(A) \, e_n \Bigr\|
\end{equation}
(here the norm of a divergent series is set to be $\infty$).
\end{definition}

This latter radius of a set generalizes the unconditional radius of a brick. Indeed, if $K_{\mathcal B, \mathcal E}$ is a brick in a Banach space $X$ constructed by a basis $\mathcal B = (e_n)_{n=1}^\infty$ with biorthogonal functionals $e_k^* \in X_0^*$ and half-height $\mathcal E = (\varepsilon_n)_{n=1}^\infty$ then $\gamma_{\mathcal B, n}(K_{\mathcal B, \mathcal E}) = \varepsilon_n$ for all $n \in \mathbb N$, and hence, $r_\mathcal B (K_{\mathcal B, \mathcal E}) = r^{\rm unc}(K_{\mathcal B, \mathcal E})$. Thus, Theorem~\ref{th:crcomp} and item (3) of Theorem~\ref{pr:kirprad} imply that if $K_{\mathcal B, \mathcal E}$ is compact then
$$
r_\mathcal B (K_{\mathcal B, \mathcal E}) = r^{\rm ext}(K_{\mathcal B, \mathcal E}) = r^{\rm unc}(K_{\mathcal B, \mathcal E}) = \sup\limits_{x \in K_{\mathcal B, \mathcal E}} \|x\|.
$$

In \cite{Sud1} the author used the sum of the series $\sum_{n=1}^\infty \gamma_{\mathcal B, n}^2(A)$ instead of the introduced above radius of $A$, which corresponds to the square of the radius for the case of an orthonormal basis of a Hilbert space. Very likely, that in such cases the square root of the sum is not taken just for aesthetic reasons, however it is much more natural to consider the norm of an element as a characteristic of something than the square of the norm. Another observation is that, for an orthonormal basis of a Hilbert space (more general, for a $1$-unconditional basis of a Banach space in the real case) the norm of the sum that appears in the definition of the radius does not depend on the signs $\theta_n$, and hence one may replace the right-hand side of \eqref{eq:radii} with the expression $\bigl\| \sum_{n=1}^\infty \gamma_{\mathcal B, n}(A) \, e_n \bigr\|$.

Remark that the radius of a set $r_\mathcal B (A)$ does depend on the basis $\mathcal B$. Moreover, in \cite{D} the first named author provided an example of a set in a separable Hilbert space the radius of which relatively to a certain basis is finite, and infinite relatively to another one.

\begin{definition}
The \emph{Sudakov characteristic} of a subset $A$ of a Banach space $X$ with a basis is the number or symbol $s(A) \in [0,\infty]$, defined by
$$
s(A) = \sup_\mathcal B r_\mathcal B (A),
$$
where the supremum is taken over all normalized bases $\mathcal B$ of $X$.
\end{definition}

The following statement shows that the entropy of a set can be defined as the Sudakov characteristic, but replacing $\sup$ with $\inf$.

\begin{prop} \label{pr:en2}
For any subset $A$ of a Banach space $X$ the following equalities hold
\begin{enumerate}
  \item $\displaystyle{\mathbf{E}(A) = \inf_\mathcal B r_\mathcal B (A)}$ (here the infimum is taken over all normalized bases $\mathcal B$ of $X$);
  \item $\displaystyle{\mathbf{E}_0(A) = \inf \bigl\{ r_\mathcal B (A): \,\, \mathcal B \,\, \mbox{is a} \,\, 1\mbox{-unconditional basis of} \,\, X \bigr\}.}$
\end{enumerate}
\end{prop}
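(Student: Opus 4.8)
The plan is to exploit the fact that, for a fixed normalized basis $\mathcal B = (e_n)_{n=1}^\infty$ of $X$, among all bricks of the form $K_{\mathcal B, \mathcal E}$ containing $A$ there is a smallest one, namely the one whose half-height is $\mathcal E_A = (\gamma_{\mathcal B, n}(A))_{n=1}^\infty$. Indeed, if $A \subseteq K_{\mathcal B, \mathcal E}$ then $\gamma_{\mathcal B, n}(A) = \sup_{x \in A} |e_n^*(x)| \leq \varepsilon_n$ for every $n$, while $A \subseteq K_{\mathcal B, \mathcal E_A}$ holds directly by the definition of the clearances, and directly from the definitions $r^{\rm unc}(K_{\mathcal B, \mathcal E_A}) = r_\mathcal B(A)$. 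This yields the inequality ``$\leq$'' in both (1) and (2): given any normalized basis $\mathcal B$ (respectively, any $1$-unconditional basis $\mathcal B$) with $r_\mathcal B(A) < \infty$, all clearances $\gamma_{\mathcal B, n}(A)$ are finite (otherwise a series $\sum_n \theta_n \gamma_{\mathcal B, n}(A) e_n$ would not even be a series of vectors of $X$), so $K_{\mathcal B, \mathcal E_A}$ is a legitimate brick, and a $1$-unconditional one when $\mathcal B$ is $1$-unconditional; it contains $A$, so $\mathbf{E}(A) \leq r^{\rm unc}(K_{\mathcal B, \mathcal E_A}) = r_\mathcal B(A)$ (respectively $\mathbf{E}_0(A) \leq r_\mathcal B(A)$), the case $r_\mathcal B(A) = \infty$ being trivial. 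Passing to the infimum over $\mathcal B$ gives ``$\leq$''.

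For the reverse inequality it suffices to prove that for every brick $K_{\mathcal B, \mathcal E} \supseteq A$ (respectively every $1$-unconditional such brick) one has $r_\mathcal B(A) \leq r^{\rm unc}(K_{\mathcal B, \mathcal E})$, since then $\inf_{\mathcal B'} r_{\mathcal B'}(A) \leq r_\mathcal B(A) \leq r^{\rm unc}(K_{\mathcal B, \mathcal E})$ and one may pass to the infimum over all bricks containing $A$. If $r^{\rm unc}(K_{\mathcal B, \mathcal E}) = \infty$ there is nothing to prove, so assume $r^{\rm unc}(K_{\mathcal B, \mathcal E}) < \infty$; then by Theorem~\ref{th:crcomp} the brick $K_{\mathcal B, \mathcal E}$ is compact and holistic. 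Fix a sign sequence $\theta = (\theta_n)_{n=1}^\infty$. Since $|\theta_n \gamma_{\mathcal B, n}(A)| = \gamma_{\mathcal B, n}(A) \leq \varepsilon_n$ for all $n$ and $K_{\mathcal B, \mathcal E}$ is holistic, the series $\sum_n \theta_n \gamma_{\mathcal B, n}(A) e_n$ converges to some $x \in X$, and as $|e_n^*(x)| = \gamma_{\mathcal B, n}(A) \leq \varepsilon_n$ we get $x \in K_{\mathcal B, \mathcal E}$; hence by Theorem~\ref{pr:kirprad}~(3), $\|x\| \leq \sup_{y \in K_{\mathcal B, \mathcal E}} \|y\| = r^{\rm unc}(K_{\mathcal B, \mathcal E})$. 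Taking the supremum over $\theta$ gives $r_\mathcal B(A) \leq r^{\rm unc}(K_{\mathcal B, \mathcal E})$. The same argument applies verbatim to item (2), because a brick built over a $1$-unconditional basis $\mathcal B$ is a $1$-unconditional brick and $\mathcal B$ is then admissible in the infimum defining $\mathbf{E}_0$.

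The one step that is not completely routine is the inequality $r_\mathcal B(A) \leq r^{\rm unc}(K_{\mathcal B, \mathcal E})$ under the assumption $\gamma_{\mathcal B, n}(A) \leq \varepsilon_n$: one might expect this ``monotonicity of the unconditional radius in the half-height'' to be obvious, but for a general (possibly conditional) basis the partial sums of a convergent series need not be norm-bounded by the limit, so a direct termwise comparison of $\sum_n \theta_n \gamma_{\mathcal B, n}(A) e_n$ with $\sum_n \theta_n \varepsilon_n e_n$ is not available. What saves the argument is that finiteness of $r^{\rm unc}(K_{\mathcal B, \mathcal E})$ upgrades, through Theorem~\ref{th:crcomp}, to compactness and holisticity of the brick; this is precisely what lets us realise each $\sum_n \theta_n \gamma_{\mathcal B, n}(A) e_n$ as a genuine point of $K_{\mathcal B, \mathcal E}$ and then bound its norm by the absolute radius, which by Theorem~\ref{pr:kirprad}~(3) coincides with $r^{\rm unc}(K_{\mathcal B, \mathcal E})$.
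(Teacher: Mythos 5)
Your argument is correct and follows the same overall strategy as the paper: for each basis $\mathcal B$ one passes to the minimal brick with half-height equal to the clearances $\bigl(\gamma_{\mathcal B,n}(A)\bigr)_{n=1}^\infty$, whose unconditional radius is exactly $r_{\mathcal B}(A)$, and this immediately gives the inequality $\mathbf{E}(A)\leq\inf_{\mathcal B}r_{\mathcal B}(A)$. Where you go beyond the paper is in the reverse inequality: the paper reduces it to the equality of the infimum over all bricks containing $A$ with the infimum over the minimal bricks $K_{\mathcal B,\Gamma_{\mathcal B}}$, and simply asserts this ("It remains to observe that\dots and therefore\dots"), which tacitly uses the monotonicity $r^{\rm unc}(K_{\mathcal B,\Gamma_{\mathcal B}})\leq r^{\rm unc}(K_{\mathcal B,\mathcal E})$ when $\gamma_{\mathcal B,n}(A)\leq\varepsilon_n$. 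You correctly identify that for a conditional basis this monotonicity is not a termwise triviality, and you supply the missing justification: finiteness of $r^{\rm unc}(K_{\mathcal B,\mathcal E})$ makes the brick holistic and compact by Theorem~\ref{th:crcomp}, so each sum $\sum_n\theta_n\gamma_{\mathcal B,n}(A)\,e_n$ is an actual element of $K_{\mathcal B,\mathcal E}$ and its norm is controlled by the absolute radius, which equals $r^{\rm unc}(K_{\mathcal B,\mathcal E})$ by Theorem~\ref{pr:kirprad}~(3). This makes your write-up more complete than the paper's at precisely the one delicate point of the proof; the treatment of item (2) carries over exactly as you say.
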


\begin{proof}
We prove (1) only; item (2) is proved similarly. We prove (1) under the assumption that the set of bricks containing $A$ is nonempty (otherwise both sides of the equality equal $\infty$). So, let $\mathcal B = (e_n)_{n=1}^\infty$ be any normalized basis of $X$ with the biorthogonal functionals $(e_n^*)_{n=1}^\infty$, $A \subseteq X$ any subset. Set $\Gamma_\mathcal B = \Gamma_\mathcal B (A) = \bigl(\gamma_{\mathcal B,n}(A)\bigr)_{n=1}^\infty$, where $\gamma_{\mathcal B,n}(A)$ are the clearances of $A$ relatively to $\mathcal B$, defined by \eqref{eq:gabarity}. By \eqref{eq:radii} and Definition~\ref{def:radii} (2), $r_\mathcal B (A) = r^{\rm unc}(K_{\mathcal B, \Gamma_\mathcal B}).$ Hence, taking into account that $A \subseteq K_{\mathcal B, \Gamma_\mathcal B}$, we obtain
$$
\mathbf{E}(A) = \inf_{A \subseteq K_{\mathcal B, \mathcal E}} r^{\rm unc}(K_{\mathcal B, \mathcal E}) \leq \inf_\mathcal B r^{\rm unc}(K_{\mathcal B, \Gamma_\mathcal B}) = \inf_\mathcal B r_\mathcal B (A).
$$

In order to prove the other side inequality, we fix any normalized basis $\mathcal B_0$ of $X$, and denote by $\Gamma_{\mathcal B_0}$ the clearances of $A$ relatively to $\mathcal B_0$. Since $A \subseteq K_{\mathcal B_0, \Gamma_{\mathcal B_0}}$, one has
$$
\inf_\mathcal B r_\mathcal B (A) \leq r_{\mathcal B_0} (A) \leq r_{\mathcal B_0} (K_{\mathcal B_0, \Gamma_{\mathcal B_0}}) = r^{\rm unc}(K_{\mathcal B_0, \Gamma_{\mathcal B_0}}).
$$

By arbitrariness of $\mathcal B_0$, we get
$$
\inf_\mathcal B r_\mathcal B (A) \leq \inf_{A \subseteq K_{\mathcal B, \Gamma_\mathcal B}} r^{\rm unc}(K_{\mathcal B, \Gamma_\mathcal B}).
$$
It remains to observe that, if $A \subseteq K_{\mathcal B, \mathcal E}$ then $A \subseteq K_{\mathcal B, \Gamma_\mathcal B}$, and therefore,
$$
\inf_{A \subseteq K_{\mathcal B, \Gamma_\mathcal B}} r^{\rm unc}(K_{\mathcal B, \Gamma_\mathcal B}) = \inf_{A \subseteq K_{\mathcal B, \mathcal E}} r^{\rm unc}(K_{\mathcal B, \mathcal E}) = \mathbf{E}(A).
$$
\end{proof}

\subsection{Is the entropy of every precompact set finite?}

Equivalently, is every precompact set contained in a compact brick? Of course, this question is substantial for subsets of a Banach space with a basis. The answer depends on a space. We start with an example of an infinite dimensional Banach space in which every precompact set has finite entropy.

\begin{prop} \label{pr:everyprecompfiniteentr}
Every precompact subset $A$ of the Banach space $c_0$ has finite entropy. Moreover,
$$
\mathbf{E}(A) = \mathbf{E}_0(A) = \sup_{x \in A} \|x\|.
$$
\end{prop}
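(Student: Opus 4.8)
The plan is to exhibit, for any precompact $A\subseteq c_0$, a $1$-unconditional compact brick $K_{\mathcal B,\mathcal E}$ containing $A$ whose unconditional radius equals $\sup_{x\in A}\|x\|$; combined with the lower bound $\mathbf E(A)\geq\mathbf E_0(A)\geq\sup_{x\in A}\|x\|$ would be false --- rather, with $\mathbf E_0(A)\geq\mathbf E(A)\geq\sup_{x\in A}\|x\|$ from Proposition~\ref{pr:estentrr}(2), this pins all three quantities together. The natural candidate is the standard basis $\mathcal B=(e_n)_{n=1}^\infty$ of $c_0$, which is $1$-unconditional, together with the clearances $\varepsilon_n=\gamma_{\mathcal B,n}(A)=\sup_{x\in A}|e_n^*(x)|$. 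By construction $A\subseteq K_{\mathcal B,\mathcal E}$, so it suffices to show this brick is compact and to compute its unconditional radius.

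First I would verify compactness of $K_{\mathcal B,\mathcal E}$ via Theorem~\ref{th:crcomp}: it is enough to check that $\sum_{n=1}^\infty \varepsilon_n e_n$ converges unconditionally in $c_0$, equivalently (by Lemma~\ref{le:sin}(iii), or just directly for $c_0$) that $\varepsilon_n\to 0$. This is exactly the statement that the clearances of a precompact set tend to zero, which follows from Lemma~\ref{le:crcomp} applied to the standard basis: precompactness of $A$ gives $\lim_{N\to\infty}\sup_{x\in A}\bigl\|\sum_{n>N}e_n^*(x)e_n\bigr\|=0$, and since in $c_0$ one has $\bigl\|\sum_{n>N}e_n^*(x)e_n\bigr\|=\sup_{n>N}|e_n^*(x)|\geq|e_N^*(x)|$ for any fixed index $N+1$, taking the supremum over $x\in A$ forces $\varepsilon_n\to 0$. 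Hence $\sum\varepsilon_n e_n$ converges in $c_0$, and because the standard basis is $1$-unconditional the convergence is unconditional, so $K_{\mathcal B,\mathcal E}$ is compact by the compactness test.

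Next I would compute the radius. Since $\mathcal B$ is $1$-unconditional, $r^{\rm unc}(K_{\mathcal B,\mathcal E})=\bigl\|\sum_{n=1}^\infty\varepsilon_n e_n\bigr\|_{c_0}=\sup_n\varepsilon_n=\sup_n\sup_{x\in A}|e_n^*(x)|=\sup_{x\in A}\sup_n|e_n^*(x)|=\sup_{x\in A}\|x\|_{c_0}$. Therefore $\mathbf E_0(A)\leq r^{\rm unc}(K_{\mathcal B,\mathcal E})=\sup_{x\in A}\|x\|$, and since $\mathbf E_0(A)\geq\mathbf E(A)\geq\sup_{x\in A}\|x\|$ by Proposition~\ref{pr:estentrr}(2), all three are equal. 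In particular the entropy is finite.

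I do not expect a serious obstacle here; the only points requiring a little care are the identification $r^{\rm unc}=\sup_n\varepsilon_n$ for the standard basis (immediate from the sup-norm and $1$-unconditionality) and the deduction $\varepsilon_n\to0$ from precompactness, which could alternatively be seen directly: a precompact subset of $c_0$ is, by the Hausdorff criterion and a standard diagonal argument, uniformly ``tail-small'' in the sup norm, which is precisely $\sup_{x\in A}\sup_{n>N}|e_n^*(x)|\to0$. Either route is routine, so the proof is short.
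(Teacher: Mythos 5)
Your proof is correct and follows essentially the same route as the paper: take the brick built from the standard basis of $c_0$ and the clearances $\varepsilon_n=\sup_{x\in A}|e_n^*(x)|$, deduce $\varepsilon_n\to 0$ from Lemma~\ref{le:crcomp}, invoke Theorem~\ref{th:crcomp} for compactness, and close the chain with Proposition~\ref{pr:estentrr}(2). The only cosmetic difference is that you compute $r^{\rm unc}(K_{\mathcal B,\mathcal E})=\sup_n\varepsilon_n=\sup_{x\in A}\|x\|$ directly from the sup-norm, where the paper cites Theorem~\ref{pr:kirprad}(3); both are fine (and your first sentence contains a momentary slip about the direction of the inequality in Proposition~\ref{pr:estentrr}(2), which you immediately correct).
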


\begin{proof}
Let $A$ be a precompact subset of $c_0$. Let $\varepsilon_n = \gamma_n(A) = \sup_{x \in A} |e_n^*(x)|$, $n = 1,2, \ldots$ be the clearances of $A$ with respect to the standard basis of $c_0$. One can easily deduce from Lemma~\ref{le:crcomp} that $\lim_{n \to \infty} \varepsilon_n = 0$. Indeed, given any $x_0 \in A$, one has
$$
|e_n^*(x_0)| = \|e_n^*(x_0) \, e_n\| \leq \Bigl\| \sum_{k>n-1} e_k^*(x_0) \, e_k\Bigr\| \leq \sup_{x \in A} \Bigl\| \sum_{k>n-1} e_k^*(x) \, e_k\Bigr\|.
$$
Then by arbitrariness of $x_0 \in A$,
$$
\gamma_n(A) \leq \Bigl\| \sum_{k>n-1} e_k^*(x) \, e_k\Bigr\| \stackrel{\mbox{\tiny by Lemma~\ref{le:crcomp}}}{\longrightarrow} 0
$$
as $n \to \infty$.

Since $\lim_{n \to \infty} \varepsilon_n = 0$, the series $\sum_{n=1}^\infty \varepsilon_n e_n$ unconditionally converges in $c_0$. By Theorem~\ref{th:crcomp}, the brick $K_{\mathcal B, \mathcal E}$ is compact. By the definition of $\varepsilon_n$'s, $A \subseteq K_{\mathcal B, \mathcal E}$, and thus $A$ has finite entropy. Thus,
$$
\mathbf{E}_0(A) \leq r^{\rm unc}(K_{\mathcal B, \mathcal E}) \stackrel{\mbox{\tiny Theorem~\ref{pr:kirprad}.(3)}}{=} \sup_{x \in A} \|x\|.
$$
By (2) of Proposition~\ref{pr:estentrr}, the proof is completed.
\end{proof}

\subsection{A class of compacts in a Hilbert space with infinite entropy}

Let $H$ be a separable infinite dimensional Hilbert space, $p > 0$. By $\mathcal B$ we denote the Borel $\sigma$-algebra of subsets of $H$. Following \cite{VTC}, a probability measure $\mu: \mathcal B \to [0,1]$ is said to have
\begin{itemize}
  \item a \emph{strong $p$-th moment} if $\displaystyle{\int_H \|u\|^p d \mu(u) < \infty}$;
  \item a \emph{weak $p$-th moment} if $\displaystyle{\int_H |(h,u)|^p d \mu(u) < \infty}$ for all $h \in H$.
\end{itemize}

In another terminology, $\mu$ is said to have a strong (resp., weak) order in the above cases. Obviously, if $\mu$ has a strong $p$-th moment then $\mu$ has a weak $p$-th moment. One can show that the converse is not true. We need the following example.

\begin{example} \label{ex:measureweaknonstrong}
There exists a probability measure $\mu: \mathcal B \to [0,1]$ having a weak $4$-th moment which does not have a strong $2$-d moment.
\end{example}

\begin{proof}
Let $(e_n)_{n=1}^\infty$ be an orthonormal basis of $H$. Consider the measure $\mu: \mathcal B \to [0,1]$ focused at the points $\sqrt{n} e_n$ with weights $\frac{6}{\pi^2} \, \frac{1}{n^2}$, $n = 1,2, \ldots$, that is,
$$
\mu = \frac{6}{\pi^2} \sum_{n=1}^\infty \frac{1}{n^2} \, \delta_{\sqrt{n} e_n},
$$
where $\delta_x (A) = 1$ if $A \ni x$ and $\delta_x (A) = 0$ if $A \not \ni x$. Then for every $h \in H$ one has
$$
\int_H |(h,u)|^4 d \mu(u) = \frac{6}{\pi^2} \sum_{n=1}^\infty \frac{1}{n^2} (h, \sqrt{n} e_n)^4 = \frac{6}{\pi^2} \sum_{n=1}^\infty (h,e_n)^4 < \infty.
$$

On the other hand,
$$
\int_H \|u\|^2 d \mu(u) = \frac{6}{\pi^2} \sum_{n=1}^\infty \frac{1}{n^2} \| \sqrt{n} e_n\|^2 = \frac{6}{\pi^2} \sum_{n=1}^\infty \frac1n = \infty.
$$
\end{proof}

The next consideration is related to \cite{D1}.

\begin{lemma} \label{le:D1.1}
Let a probability measure $\mu: \mathcal B \to [0,1]$ have a weak $p$-th moment with $p \geq 2$. Then the mapping $T: H \to L_p(H, \mathcal B, \mu)$ given by $(Tu)(h) = (u,h)$ for $u,h \in H$ is a linear bounded operator.
\end{lemma}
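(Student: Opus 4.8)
The plan is to verify the two requirements of a bounded linear operator separately: first that $T$ is well-defined (i.e. $Tu \in L_p(H,\mathcal B,\mu)$ for each $u \in H$), and then that $T$ is bounded. Linearity is immediate from the linearity of the inner product in the first argument, and measurability of $h \mapsto (u,h)$ is clear since it is a continuous (hence Borel) function on $H$. So the real content is the $L_p$-norm estimate.

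For well-definedness, fix $u \in H$. By the hypothesis of a weak $p$-th moment applied to $h = u$ we have $\int_H |(u,h)|^p \, d\mu(h) < \infty$, which is precisely $\|Tu\|_{L_p}^p < \infty$. So $T$ maps $H$ into $L_p(H,\mathcal B,\mu)$.

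For boundedness, the natural approach is a closed graph argument. Since $T$ is a linear map between Banach spaces, it suffices to show its graph is closed. Suppose $u_k \to u$ in $H$ and $Tu_k \to g$ in $L_p(H,\mathcal B,\mu)$; I must show $g = Tu$ $\mu$-a.e. From $Tu_k \to g$ in $L_p$ we extract a subsequence converging $\mu$-a.e. to $g$. On the other hand, for each fixed $h \in H$ the Cauchy--Schwarz inequality gives $|(u_k,h) - (u,h)| \leq \|u_k - u\| \, \|h\| \to 0$, so $(Tu_k)(h) \to (Tu)(h)$ pointwise for every $h$. Matching the two limits along the subsequence yields $g(h) = (Tu)(h)$ for $\mu$-a.e. $h$, i.e. $g = Tu$ in $L_p(H,\mathcal B,\mu)$. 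Hence the graph is closed and, by the closed graph theorem, $T$ is bounded.

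The main obstacle — and the only place the hypothesis $p \geq 2$ or anything beyond a weak first moment could enter — is that the closed graph argument as sketched does not by itself quantify the norm, and one should check that the weak $p$-th moment hypothesis really is what guarantees $Tu \in L_p$ for \emph{every} $u$, not just on a dense set; this is exactly the role of the hypothesis being stated for all $h \in H$. An alternative, more hands-on route avoiding the closed graph theorem would be to expand in an orthonormal basis $(e_n)$: writing $u = \sum_n a_n e_n$ one would like $\|Tu\|_p \leq \sum_n |a_n| \, \|Te_n\|_p$, but this requires $\sum_n \|Te_n\|_p < \infty$, which is not obviously implied by the hypothesis; so I expect the closed graph approach to be the clean one, with the condition $p\ge 2$ presumably needed in the sequel (e.g. to relate weak and strong moments) rather than in this lemma itself.
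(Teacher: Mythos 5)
Your argument is correct and is exactly the route the paper takes: its proof consists of the single remark that linearity is obvious and boundedness follows from the Closed Graph Theorem, and you have supplied the standard details (a.e.\ convergent subsequence from $L_p$-convergence matched against the everywhere pointwise convergence given by Cauchy--Schwarz). Your closing observation is also right: only the weak $p$-th moment itself is used here, and the hypothesis $p\ge 2$ plays its role later (in Lemma~\ref{le:D1.2}).
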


\begin{proof}
The linearity of $T$ is obvious, and the boundedness one can prove using the Closed Graph Theorem.
\end{proof}

Recall the definition of the Pettis integral. Let $f: \Omega \to X$, where $(\Omega, \Sigma, \nu)$ is a probability space and $X$ a real Banach space. By the Dunford theorem \cite[p.~52]{DU}, if $x^*(f) \in L_1(\nu)$ for all $x^* \in X^*$ then for each $A \in \Sigma$ there exists $z \in X^{**}$ such that
$$
z(x^*) = \int_A x^*(f) \, d \nu
$$
for all $x^* \in X^*$. The above element $z$ is called the Dunford integral of $f$ over $A$ and is denoted by $z =$ {\tiny D}-\hspace{-0,1 cm}$\displaystyle{\int_A f \, d \nu}$. If, furthermore, $z$ belongs to the canonical image of $X$ in $X^{**}$ then $f$ is said to be Pettis integrable, and the Dunford integral of $f$ is set to be the Pettis integral {\tiny P}-\hspace{-0,1 cm}$\displaystyle{\int_A f \, d \nu}$. The latter condition always holds if $X$ is reflexive, so the Pettis integral exists for any function $f: \Omega \to X$ such that $x^*(f) \in L_1(\nu)$ for all $x^* \in X^*$.

\begin{lemma} \label{le:D1.2}
Let a probability measure $\mu: \mathcal B \to [0,1]$ have a weak $p$-th moment with $p \geq 2$. Then for every $u \in H$ the Pettis integral

\begin{equation} \label{eq:star}
j(u) = \mbox{{\tiny P}-\hspace{-0,2 cm}}\int_H (u,v) \, v \, d \mu(v)
\end{equation}
exists, and $j: H \to H$ is a linear bounded operator. If, moreover, $p > 2$ then $j$ is compact.
\end{lemma}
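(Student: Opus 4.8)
The plan is to realize $j$ as $T^{*}T$ for a suitable Hilbert-space operator $T$ and to read off all three assertions from that factorization. Since $\mu$ is a probability measure and $p\ge 2$, one has $L_{p}(\mu)\subseteq L_{2}(\mu)$ with $\|g\|_{L_2}\le\|g\|_{L_p}$, so the operator of Lemma~\ref{le:D1.1} also acts boundedly as $T\colon H\to L_{2}(\mu)$, $(Tu)(v)=(u,v)$, with $\|T\|_{H\to L_2}\le\|T\|_{H\to L_p}$. Its Hilbert-space adjoint $T^{*}\colon L_{2}(\mu)\to H$ is bounded, and for $g\in L_2(\mu)$ and $h\in H$ one has $(T^{*}g,h)=\int_{H}g(v)\,(h,v)\,d\mu(v)$, i.e.\ $T^{*}g$ is the $H$-valued weak integral $\int_H g(v)\,v\,d\mu(v)$. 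Hence $T^{*}T$ is a bounded (positive, self-adjoint) operator on $H$ and, for all $u,h\in H$,
\[
(T^{*}Tu,\,h)=\int_{H}(u,v)\,(h,v)\,d\mu(v),
\]
the integral being absolutely convergent since $\int_H|(u,v)(h,v)|\,d\mu\le\|Tu\|_{L_2}\|Th\|_{L_2}<\infty$ by Cauchy--Schwarz and the weak $p$-th moment hypothesis. This identity is exactly the defining property of the Dunford integral of the (continuous, hence weakly measurable) map $v\mapsto (u,v)\,v$; as $H$ is reflexive, that Dunford integral is a Pettis integral, so the integral~\eqref{eq:star} exists and $j=T^{*}T$. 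Linearity of $j$ is then immediate, and $\|j\|\le\|T\|^{2}$.

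Now assume $p>2$ and prove that $j$ is compact. Since $j=T^{*}T$, it suffices to show $T\colon H\to L_{2}(\mu)$ is compact. Let $(f_{n})$ lie in the unit ball of $H$; by reflexivity pass to a subsequence with $f_{n}\rightharpoonup f$ weakly in $H$. Then $(Tf_{n})(v)=(f_{n},v)\to(f,v)=(Tf)(v)$ for every $v\in H$, so $Tf_{n}\to Tf$ pointwise, in particular $\mu$-a.e. On the other hand $(Tf_{n})$ is bounded in $L_{p}(\mu)$ by Lemma~\ref{le:D1.1}, and since $p/2>1$ the family $\{|Tf_{n}|^{2}\}_{n}$ is bounded in $L_{p/2}(\mu)$, hence uniformly $\mu$-integrable: for $K>0$, $\int_{\{|Tf_n|^2>K\}}|Tf_n|^2\,d\mu\le K^{1-p/2}\sup_n\|Tf_n\|_{L_p}^{p}\to 0$ as $K\to\infty$, uniformly in $n$. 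Combining $\mu$-a.e.\ convergence with uniform integrability over the finite measure $\mu$, Vitali's convergence theorem yields $Tf_{n}\to Tf$ in $L_{2}(\mu)$. Thus $T$ sends weakly convergent sequences to norm-convergent ones; being an operator out of a reflexive space, $T$ is compact, and therefore so is $j=T^{*}T$.

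The first paragraph is routine bookkeeping with adjoints, Cauchy--Schwarz and the reflexivity of $H$; it essentially re-proves existence and boundedness simultaneously. The real content is the second paragraph, where the strict inequality $p>2$ must be used in an essential way, through the boundedness of $\{|Tf_n|^2\}$ in $L_{p/2}$ with $p/2>1$ and the resulting uniform integrability — precisely the step that fails for $p=2$ — so that the pointwise limit can be upgraded to an $L_2$-limit via Vitali's theorem rather than dominated convergence (no single dominating function is available). Equivalently, one may note $Tf_n\rightharpoonup Tf$ weakly in $L_2(\mu)$ together with $\|Tf_n\|_{L_2}\to\|Tf\|_{L_2}$ (again by the same uniform-integrability argument) and invoke the Radon--Riesz property of $L_2(\mu)$.
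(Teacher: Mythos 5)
Your proof is correct, and it differs from the paper's in a way worth noting. For the existence and boundedness of $j$, both arguments rest on the same estimate — Cauchy--Schwarz followed by the embedding $L_p(\mu)\subseteq L_2(\mu)$ for the probability measure $\mu$, plus reflexivity of $H$ to upgrade the Dunford integral to a Pettis integral — though your packaging of this as the factorization $j=T^{*}T$ with $T$ the operator of Lemma~\ref{le:D1.1} viewed as a map into $L_2(\mu)$ is cleaner and yields boundedness ($\|j\|\le\|T\|^2$) and positivity for free, whereas the paper only verifies the $L_1$ condition defining the Dunford integral. The real divergence is in the compactness claim: the paper does not prove it at all but cites \cite{D1}, while you give a complete, self-contained argument — weak convergence in $H$ gives pointwise convergence of $Tf_n$, the $L_{p/2}$-bound on $|Tf_n|^2$ with $p/2>1$ gives uniform integrability (your tail estimate $K^{1-p/2}\sup_n\|Tf_n\|_{L_p}^p$ is correct), and Vitali's theorem upgrades this to $L_2$-convergence, so $T$ is completely continuous and hence compact out of the reflexive space $H$, making $j=T^{*}T$ compact. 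This correctly isolates where $p>2$ enters (the exponent $1-p/2<0$), which is exactly the point at which the argument degenerates for $p=2$, consistent with Example~\ref{ex:nonHS} showing no stronger conclusion is automatic at the endpoint. Your version is therefore strictly more informative than the paper's; the only cosmetic remark is that the paper's closing phrase ``and $p>2$'' in its existence estimate should read $p\ge 2$, a slip you avoid.
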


\begin{proof}
The existence of the Pettis integral in Lemma~\ref{le:D1.2} means that $\bigl(h, (u, \cdot) \, \cdot \bigr) \in L_1(H, \mathcal B, \mu)$ for all $h \in H$, which follows from the well known inequalities
\begin{align*}
\int_H \bigl| (u,v) (h,v) \bigr| \, d \mu(v) &\leq \Bigl( \int_H \bigl| (u,v) \bigr|^2 \, d \mu(v) \Bigr)^{1/2} \Bigl( \int_H \bigl| (h,v) \bigr|^2 \, d \mu(v) \Bigr)^{1/2}\\
&\leq \Bigl( \int_H \bigl| (u,v) \bigr|^p \, d \mu(v) \Bigr)^{1/p} \Bigl( \int_H \bigl| (h,v) \bigr|^p \, d \mu(v) \Bigr)^{1/p} < \infty,
\end{align*}
because $\mu$ has a weak $p$-th moment and $p > 2$. The compactness of $j$ is proved in \cite{D1}.
\end{proof}

Remark that the image $j(H)$ for $\mu$ serves as the space of acceptable shifts for a Gaussian measure $\mu$ (see \cite{D1}).

The following example shows that $j$ need not be a Hilbert-Schmidt operator (an operator $T \in \mathcal L(H)$ is called a Hilbert-Schmidt operator if $\sum_{n=1}^\infty \|Te_n\|^2 < \infty$ for some, or equivalently each, orthonormal basis $(e_n)_{n=1}^\infty$ of $H$).

\begin{example} \label{ex:nonHS}
There exists a probability measure $\mu: \mathcal B \to [0,1]$ having a weak $2$-th moment such that the operator $j$ defined by \eqref{eq:star} is compact and not a Hilbert-Schmidt operator.
\end{example}

\begin{proof}
Fix any orthonormal basis $(e_n)_{n=1}^\infty$ of $H$ and set
$$
\mu = C \sum_{n=1}^\infty \frac{1}{n \ln^2 (n+1)} \, \delta_{\sqrt{n} e_n},
$$
where $C$ is the norming constant taken from the condition $\mu(H) = 1$. Show that $\mu$ has a weak $2$-th moment. Indeed, given any $h \in H$, we have
$$
\int_H |(h,u)|^2 d \mu(u) = C \sum_{n=1}^\infty \frac{1}{n \ln^2 (n+1)} (h, \sqrt{n} e_n)^2 \leq \frac{C}{\ln^2 2} \sum_{n=1}^\infty (h,e_n)^2 < \infty.
$$

Observe that, by the definition of the Pettis integral, for every $u, h \in H$
$$
\bigl( j(u),h \bigr) = \int_H \bigl( h, (u,v) \, v \bigr) \, d \mu(v) = \int_H (u,v) (h,v) \, d \mu(v).
$$

Hence, for every $u \in H$
\begin{align*}
j(u) &= \sum_{n=1}^\infty \bigl( j(u),e_n \bigr) \, e_n\\
&= \sum_{n=1}^\infty \Bigl( \int_H (u,v) (e_n,v) \, d \mu(v) \Bigr) \, e_n\\
&= C \sum_{n=1}^\infty \Bigl( \sum_{k=1}^\infty \frac{(u, \sqrt{k} e_k) (e_n, \sqrt{k} e_k)}{k \ln^2 (k+1)} \Bigr) \, e_n\\
&= C \sum_{n=1}^\infty \frac{n (u,e_n)}{n \ln^2 (n+1)} \, e_n = C \sum_{n=1}^\infty \frac{(u,e_n)}{\ln^2 (n+1)} \, e_n.
\end{align*}

Thus, $j(e_k) = \frac{e_k}{\ln^2(k+1)}$, confirming that $j$ is compact and fails to be a Hilbert-Schmidt operator.
\end{proof}

\bibliographystyle{amsplain}

\end{document}